\numberwithin{equation}{section}
\newtheorem{proposition}{Proposition}[section]
\newtheorem{theorem}[proposition]{Theorem}
\newtheorem{lemma}[proposition]{Lemma}
\newtheorem{definition}[proposition]{Definition}
\newtheorem{remark}[proposition]{Remark}
\renewenvironment{proof}{\smallskip\noindent{\textbf{Proof.}}%
  \hspace{1pt}}{\hspace{-5pt}{\nobreak\quad\nobreak\hfill\nobreak%
    $\square$\vspace{2pt}\par}\smallskip\goodbreak}
\newenvironment{proofof}[1]{\smallskip\noindent{\textbf{Proof~of~#1.}}%
  \hspace{1pt}}{\hspace{-5pt}{\nobreak\quad\nobreak\hfill\nobreak%
    $\square$\vspace{2pt}\par}\smallskip\goodbreak}
\newcommand{\C}[1]{\mathbf{C}^{#1}}
\newcommand{\Cc}[1]{\mathbf{C}_c^{#1}}
\newcommand{\BV}{\mathbf{BV}}
\renewcommand{\L}[1]{{\mathbf{L}^#1}}
\newcommand{\Wloc}[2]{{\mathbf{W}_{\mathbf{loc}}^{#1,#2}}}
\newcommand{\modulo}[1]{{\left|#1\right|}}
\newcommand{\norma}[1]{{\left\|#1\right\|}}
\newcommand{\caratt}[1]{{\chi_{\strut#1}}}
\newcommand{\reali}{{\mathbb{R}}}
\newcommand{\naturali}{{\mathbb{N}}}
\renewcommand{\epsilon}{\varepsilon}
\renewcommand{\phi}{\varphi}
\renewcommand{\theta}{\vartheta}
\newcommand{\tv}{\mathinner{\rm TV}}
\newcommand{\spt}{\mathop{\rm spt}}
\newcommand{\sgn}{\mathop{\rm sgn}}
\renewcommand{\d}[1]{\mathinner{\mathrm{d}{#1}}}
\newcommand{\ub}{\boldsymbol{u_b}}
\DeclareMathOperator*{\esssup}{ess\,sup}
\DeclareMathOperator*{\essinf}{ess\,inf}
\let\@fnsymbol\@arabic
\title{IBVPs for Scalar Conservation Laws \\ with Time Discontinuous
  Fluxes}
\author{Rinaldo M.~Colombo\footnote{\texttt{rinaldo.colombo@unibs.it}}
  \qquad Elena Rossi\footnote{\texttt{elena.rossi@unibs.it}} \\ INDAM
  Unit, University of Brescia, Italy}
\date{}
\begin{document}

\maketitle

\begin{abstract}

  \noindent The initial boundary value problem for a class of scalar
  non autonomous conservation laws in one space dimension is proved to
  be well posed and stable with respect to variations in the
  flux. Targeting applications to traffic, the regularity assumptions
  on the flow are extended to a merely $\L\infty$ dependence on
  time. These results ensure, for instance, the well posedness of a
  class of vehicular traffic models with time dependent speed
  limits. A traffic management problem is then shown to admit an
  optimal solution.

  \medskip

  \noindent\textit{2010~Mathematics Subject Classification:} 35L65,
  35L04

  \medskip

  \noindent\textit{Keywords:} Conservation Laws, Boundary Value
  Problems for Conservation Laws
\end{abstract}

\section{Introduction}
\label{sec:Intro}

In this paper we deal with a non linear Initial Boundary Value Problem
(IBVP) for a non autonomous scalar conservation law in one space
dimension. Our main result is its well posedness and the stability of
solutions with respect to variations in the flux, relaxing the
regularity assumptions found in the literature, see for
instance~\cite{BardosLerouxNedelec, IBVP1D, Martin, Vovelle}.

The theory of Conservation Laws traditionally splits in that of scalar
multi--dimensional equations and that of one dimensional systems.  In
the former case, the key reference related to IBVPs
is~\cite{BardosLerouxNedelec}, see also~\cite{bordo, IBVP1D,
  MalekEtAlBook, Martin, OttoCR, Vovelle}. In the latter case, we
refer to~\cite{AmadoriColombo97, DonadelloMarson,
  DuboisLeFloch, Goodman}.

Below, we consider the following IBVP both on the (unbounded) half
line $\reali_+$
\begin{equation}
  \label{eq:1part}
  \left\{
    \begin{array}{l@{\,}c@{\,}lr@{\,}c@{\,}l}
      \multicolumn{3}{l}{%
      \partial_t u + \partial_x \left(v(t) \, g(u)\right) = 0\qquad\ }
      & (t,x)
      & \in
      & [0,T] \times \reali_+
      \\
      u (0,x)
      & =
      & u_o (x)
      & x
      & \in
      & \reali_+
      \\
      u (t, 0)
      & =
      & u_b (t)
      & t
      & \in
      & [0,T],
    \end{array}
  \right.
\end{equation}
and on the bounded segment $[0, L]$
\begin{equation}
  \label{eq:1sgPart}
  \left\{
    \begin{array}{l@{\,}c@{\,}lr@{\,}c@{\,}l}
      \multicolumn{3}{l}{%
      \partial_t u + \partial_x \left(v(t) \, g(u)\right) = 0\qquad\ }
      & (t,x)
      & \in
      & [0,T] \times [0,L]
      \\
      u (0,x)
      & =
      & u_o (x)
      & x
      & \in
      & [0,L]
      \\
      u (t, 0)
      & =
      & u_{b,1} (t)
      &  t
      & \in
      & [0,T]
      \\
      u (t,L)
      & =
      & u_{b,2} (t)
      & t
      & \in
      & [0,T].
    \end{array}
  \right.
\end{equation}
Remark that here the flux is the product between a time dependent
function $v (t)$ and a function of the unknown $g (u)$. The time
dependent part of the flux is here assumed to be merely $\L\infty$ and
the domain needs not be bounded, significantly extending what required
in~\cite{BardosLerouxNedelec, Martin, Vovelle}, where the flow is
smooth and the domain is bounded. In~\cite{IBVP1D}, the domain can be
unbounded but the flow needs be smooth too, although of the more
general form $f (t,u)$.  Nevertheless, the results in~\cite[\S~2.1 and
\S~3.1]{IBVP1D} concerning the autonomous (time independent) problems,
that is with flux of the form $f (u)$, constitute the starting point
for the present work. Aiming to deal with a merely $\L\infty$ function
$v$, as an intermediate step we first focus on the case of Lipschitz
continuous $v$. Under this assumption, an \emph{ad hoc}
diffeomorphism, transforming~\eqref{eq:1part},
respectively~\eqref{eq:1sgPart}, into an autonomous problem of the
type considered in~\cite{IBVP1D}, allows to extend the well posedness
and stability results to the present setting. The stability with
respect to the flux, and in particular to $v$, allows then to further
relax the regularity assumptions on $v$, up to $\L\infty$, without any
requirement on the boundedness of the total variation in time.

We adopt the same definition of solution to~\eqref{eq:1part},
respectively~\eqref{eq:1sgPart}, as in~\cite{IBVP1D}.  As far as
bounded domains are concerned, this definition was introduced
in~\cite{Martin, OttoCR, Vovelle}, see also~\cite{MalekEtAlBook}. As
in~\cite{IBVP1D}, we also consider the unbounded case of the half
line. This definition of solution has the remarkable feature of being
stable under convergence in $\L1$, see~\cite[Chapter~2,
Remark~7.33]{MalekEtAlBook} and also~\cite[Remark~3.6]{Elena1}. This
constitutes a crucial element in the proof of the well posedness of
problem~\eqref{eq:1part} when $v$ is a function in $\L\infty$, see
Theorem~\ref{thm:main}. We refer to~\cite{Elena1} for a deeper
discussion of the various definitions of solutions to IBVPs for
balance laws.

The well posedness and stability results obtained below suits various
issues arising in the macroscopic modelling of vehicular traffic
flows. Indeed, we consider in detail the problem of choosing a
variable speed limit that minimizes the time spent in queues at a
traffic light. We refer to~\cite{Strnad2016841} and to the references
therein for a more engineering oriented approach, to~\cite{simai} for
further related control and inverse problems.

\smallskip

The paper is organised as follows. Section~\ref{sec:Main} deals with
problem~\eqref{eq:1part} on the half line: after providing the
definition of solution, \S~\ref{sec:lipschitz} presents the results
for Lipschitz continuous $v$, while \S~\ref{sec:discont} is devoted to
the case $v \in \L\infty$. Section~\ref{sec:seg} is structured in a
similar way for the IBVP~\eqref{eq:1sgPart} on a bounded
segment. These results allow to prove the existence of an optimal
control for a traffic management problem, see
Section~\ref{sec:example}. All proofs related to the case of the half
line are collected in Section~\ref{sec:TP}, where the necessary
preliminary results are also recalled.

\section{The Case of a Half Line}
\label{sec:Main}

All statements and proofs are referred to the time interval $[0,T]$,
for a fixed positive $T$. Throughout, $v_{\min}$ is a strictly
positive constant.

Below, if $u_\ell \in \L\infty (I_\ell;\reali)$ for real intervals
$I_\ell$, $\ell=1, \ldots, m$, we denote
\begin{equation}
  \label{eq:19}
  \mathcal{U} (u_1, \ldots, u_m)
  =
  \bigl[
  \min_{\ell=1, \ldots, m} \essinf_{I_\ell} u_\ell \,, \;
  \max_{\ell=1, \ldots, m} \esssup_{I_\ell} u_\ell
  \bigr] \,.
\end{equation}
In other words, $\mathcal{U} (u_1, \ldots, u_m)$ is the closed convex
hull of $\bigcup_{\ell=1}^m u_\ell (I_\ell)$. Whenever $I_u$ is a real
interval, for $u \in \BV (I_u; \reali)$, $\tv (u)$ stands for the
total variation of $u$ on $I_u$, see~\cite[\S~5.10.1]{EvansGariepy},
and, for any interval $I \subseteq I_u$, we also set
$\tv (u;I) = \tv (u_{|I})$. For the vector
$\boldsymbol{u}= \left(u_1, \ldots, u_m\right)$, we define
$ \tv(\boldsymbol{u}) = \sum_{\ell=1}^m \tv (u_\ell)$.

The following notation is of use
\begin{align*}
  \sgn{}^+ (s) = \
  & \begin{cases}
    1 &\mbox{ if } s>0,\\
    0 &\mbox{ if } s \leq 0,
  \end{cases}
      &
        \sgn{}^- (s) = \
      & \begin{cases}
        0 &\mbox{ if } s\geq0,\\
        -1 & \mbox{ if } s < 0,
      \end{cases}
      &
        \begin{aligned}
          s^+= \ & \max\{s,0\},\\
          s^-= \ & \max\{-s,0\}.
        \end{aligned}
\end{align*}
Occasionally, we also denote $t_1 \vee t_2 = \max\{t_1, t_2\}$.

\smallskip

\begin{definition}
  \label{def:solhl}
  A \emph{solution} to the IBVP
  \begin{equation}
    \label{eq:1}
    \left\{
      \begin{array}{l@{\,}c@{\,}lr@{\,}c@{\,}l}
        \multicolumn{3}{l}{%
        \partial_t u + \partial_x f (t,u) = 0\qquad\ }
        & (t,x)
        & \in
        & [0,T]  \times  \reali_+
        \\
        u (0,x)
        & =
        & u_o (x)
        & x
        & \in
        & \reali_+
        \\
        u (t, 0)
        & =
        & u_b (t)
        & t
        & \in
        & [0,T],
      \end{array}
    \right.
  \end{equation}
  is a map $u \in \L\infty ([0,T] \times \reali_+; \reali)$ such that
  for any $k \in \reali$ and for any test function
  $\phi \in \Cc1 (\reali \times \reali; \reali_+)$
  \begin{align*}
    & \int_0^T \!\!\! \int_{\reali_+}
      \left\{
      \left(u (t,x) - k\right)^\pm  \partial_t\phi (t,x)
      +
      \sgn{}^\pm \!\left(u (t,x) - k \right)\!
      \left(f \left(t, u (t,x)\right) - f (t,k)\right)  \partial_x \phi (t,x)
      \right\}
      \d{x} \d{t}
    \\
    & + \int_{\reali_+} \left(u_o (x) - k\right)^\pm  \phi (0, x) \,
      \d{x}
      - \int_{\reali_+} \left(u (T,x) - k\right)^\pm \phi (T, x) \,
      \d{x}
    \\
    & + \norma{\partial_u f}_{\L\infty ([0,T] \times \mathcal{U}; \reali)}
      \int_0^T \left(u_b (t) - k\right)^\pm \phi (t,0) \, \d{t}
      \geq  0,
  \end{align*}
  where $\mathcal{U} = \mathcal{U} (u_o, {u_b}_{|[0,T]})$ as
  in~\eqref{eq:19}.
\end{definition}

The above choice is inspired by~\cite[Definition~2.1]{IBVP1D}, see
also~\cite{Martin, Vovelle} in a slightly different setting. Refer in
particular to~\cite{Elena1} for a comparison among various definitions
of solutions to IBVPs for general scalar balance laws, also in several
space dimensions.

\subsection{Lipschitz Continuous $\boldsymbol{v}$}
\label{sec:lipschitz}

This paragraph is devoted to the well posedness of
problem~\eqref{eq:1part}, under the assumptions that the time
dependent part of the flux $v$ is in
$\C{0,1} ([0,T]; [v_{\min}, +\infty[)$. All proofs of the results
presented below are deferred to \S~\ref{sec:vLip}.

First, we extend~\cite[Proposition~2.2]{IBVP1D}, see also
Proposition~\ref{prop:Elena}, to the present more general case,
i.e.~under less regularity assumptions on the flux, obtaining the
$\L1$--Lipschitz continuous dependence of the solutions on initial and
boundary data.

\begin{proposition}
  \label{prop:nonso}
  Let $v \in \C{0,1} ([0,T]; [v_{\min}, +\infty[)$. Fix
  $g \in \C1 (\reali; \reali)$. Let
  $u_o, \tilde u_o \in (\L1 \cap \BV) (\reali_+; \reali)$,
  $u_b, \tilde u_b \in (\L1 \cap \BV) ([0,T]; \reali)$. Assume that
  the IBVPs
  \begin{displaymath}
    \left\{
      \begin{array}{r@{\;}c@{\;}l}
        \multicolumn{3}{l}{\partial_t u + \partial_x \left(v(t) \, g(u)\right) = 0}
        \\
        u (0,x)
     & =
     & u_o (x)
        \\
        u (t, 0)
     & =
     & u_b (t)
      \end{array}
    \right.
    \quad \mbox{ and } \quad
    \left\{
      \begin{array}{r@{\;}c@{\;}l}
        \multicolumn{3}{l}{\partial_t \tilde u + \partial_x \left(v(t) \, g(\tilde u)\right) = 0}
        \\
        \tilde u (0,x)
     & =
     & \tilde u_o (x)
        \\
        \tilde u (t, 0)
     & =
     & \tilde u_b (t)
      \end{array}
    \right.
  \end{displaymath}
  admit solutions
  $u,\tilde u \in \L\infty ([0,T] \times \reali_+; \reali)$ in the
  sense of Definition~\ref{def:solhl}, such that $u$ and $\tilde u$
  both admit a trace for $x \to 0+$ for a.e.~$t \in [0,T]$. Then, for
  all $t \in [0,T]$,
  \begin{displaymath}
    \norma{u (t) - \tilde u (t)}_{\L1 (\reali_+; \reali)}
    \leq
    \norma{u_o - \tilde u_o}_{\L1 (\reali_+; \reali)}
    +
    \norma{v}_{\L\infty ([0,t];\reali)} \,
    \norma{g'}_{\L\infty (\mathcal{U};\reali)} \,
    \norma{u_b - \tilde u_b}_{\L1 ([0,t]; \reali)},
  \end{displaymath}
  where
  $\mathcal{U} = \mathcal{U}( {u_b}_{|[0,t]}, {\tilde u}_{b|[0,t]})$
  as in~\eqref{eq:19}.
\end{proposition}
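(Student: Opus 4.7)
Since $f(t,u) = v(t)\,g(u)$, we have $\norma{\partial_u f}_{\L\infty([0,T] \times \mathcal{U}; \reali)} = \norma{v}_{\L\infty([0,T];\reali)} \, \norma{g'}_{\L\infty(\mathcal{U};\reali)}$, so the boundary penalty in Definition~\ref{def:solhl} has exactly the size appearing on the right hand side of the target inequality. The plan is to run Kruzhkov's doubling of variables, adapted to the IBVP setting along the lines of~\cite{Vovelle} and of the autonomous portion of~\cite{IBVP1D}. The assumption that both $u$ and $\tilde u$ admit a strong trace at $x=0+$ for a.e.~$t$ is crucial and will be used to turn the distributional inequality produced by doubling into a genuine Kato estimate with a controlled boundary contribution.

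Concretely, apply Definition~\ref{def:solhl} to $u=u(t,x)$ with constant $k = \tilde u(s,y)$, summing the $+$ and $-$ variants to obtain an inequality in $\modulo{u-\tilde u(s,y)}$ and $\sgn\bigl(u-\tilde u(s,y)\bigr)$. Do the symmetric thing for $\tilde u = \tilde u(s,y)$ with $k=u(t,x)$. With the nonnegative test function
$$
\phi_\epsilon(t,x,s,y) = \psi(t,x) \, \rho_\epsilon(t-s) \, \rho_\epsilon(x-y),
$$
for $\psi \in \Cc1(\reali \times \reali; \reali_+)$ and $\rho_\epsilon$ a standard symmetric mollifier, sum the two resulting inequalities and let $\epsilon \to 0+$. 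By the usual Kruzhkov calculus, the interior contributions recombine into
$$
\int_0^T \!\!\! \int_{\reali_+} \Bigl\{\modulo{u - \tilde u}\, \partial_t \psi + \sgn(u-\tilde u) \bigl(f(t,u) - f(t,\tilde u)\bigr) \partial_x \psi \Bigr\} \d{x}\d{t},
$$
the initial/final contributions produce $\int_{\reali_+}\bigl(\modulo{u_o-\tilde u_o}\psi(0,x) - \modulo{u(T)-\tilde u(T)}\psi(T,x)\bigr) \d{x}$, and the two boundary penalties give an expression involving $\modulo{u_b - \tau\tilde u}$ and $\modulo{\tilde u_b - \tau u}$ at $x=0$, where $\tau u,\tau \tilde u$ denote the traces whose existence is assumed.

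To close the argument, I would pick $\psi(t,x) = \chi_\sigma(t)\,\xi_R(x)$ with $\chi_\sigma$ a smooth approximation of $\caratt{[0,t]}$ for the fixed $t \in [0,T]$ in the statement and $\xi_R \in \Cc1(\reali;[0,1])$ identically $1$ on $[0,R]$; the $\partial_x\psi$ contribution away from the boundary vanishes as $R \to +\infty$ thanks to $u,\tilde u \in \L1$, and letting $\sigma \to 0$ then yields the claimed contractive estimate. The main obstacle is the boundary step: one has to show that the trace contribution $\sgn(\tau u - \tau\tilde u)\bigl(f(t,\tau u) - f(t,\tau \tilde u)\bigr)\psi(t,0)$ produced by the interior term at $x=0$ is absorbed by the boundary penalties coming from the doubling. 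This is precisely where the penalty formulation of Definition~\ref{def:solhl}, combined with the elementary bound $\modulo{f(t,a)-f(t,b)} \leq \norma{\partial_u f}_{\L\infty}\modulo{a-b}$ and a Bardos--Leroux--Nédélec type inequality on the traces, delivers the $\norma{v}_{\L\infty}\,\norma{g'}_{\L\infty(\mathcal{U})}\,\modulo{u_b-\tilde u_b}$ control with the sharp constant.
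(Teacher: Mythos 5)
Your plan re-proves the Kato/contraction estimate from scratch by doubling of variables, which is not the route the paper takes, and as written it has two genuine gaps exactly at the points where the difficulty lies. First, the interior recombination is not ``the usual Kruzhkov calculus'' here: the flux $f(t,u)=v(t)\,g(u)$ is only Lipschitz in $t$, and doubling the time variable produces a commutator term of the form $\sgn\left(u(t,x)-\tilde u(s,y)\right)\left(v(t)-v(s)\right)\left(g\left(u(t,x)\right)-g\left(\tilde u(s,y)\right)\right)\psi\,\rho_\epsilon(t-s)\,\rho'_\epsilon(x-y)$, whose naive size is $O(1)$ (the factor $\modulo{v(t)-v(s)}\leq \mathrm{Lip}(v)\,\epsilon$ is exactly cancelled by $\int\modulo{\rho'_\epsilon}\sim \epsilon^{-1}$). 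Killing it requires an extra ingredient — e.g.\ spatial BV bounds or an $\L1$ translation-continuity argument for $\tilde u(s,\cdot)$ — and note that the proposition only assumes $u,\tilde u\in\L\infty$ with traces at $x=0+$, so BV of the solutions is not available for free. Second, the boundary step, which you yourself flag as ``the main obstacle,'' is precisely the heart of such IBVP estimates (this is the content of the trace/Bardos--LeRoux--N\'ed\'elec analysis behind \cite[Proposition~2.2]{IBVP1D}); asserting that the penalty terms absorb the trace contribution ``with the sharp constant'' is a claim, not a proof. A smaller point: identifying the constant as $\norma{\partial_u f}_{\L\infty([0,T]\times\mathcal U)}$ gives $\norma{v}_{\L\infty([0,T])}$, whereas the statement has the sharper $\norma{v}_{\L\infty([0,t])}$.

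The paper avoids both difficulties entirely: it sets $\Gamma^{-1}(t)=\int_0^t v(s)\,\d s$ and uses Lemma~\ref{lem:Gamma} to transform both problems into autonomous IBVPs with flux $g(w)$, to which Proposition~\ref{prop:Elena} (quoted from~\cite{IBVP1D}, where all the doubling and boundary-trace work is already done for a $\C1$ flux) applies directly; changing variables back in the boundary integral, $\norma{w_b-\tilde w_b}_{\L1([0,\Gamma^{-1}(t)])}\leq\norma{v}_{\L\infty([0,t];\reali)}\norma{u_b-\tilde u_b}_{\L1([0,t])}$, yields the stated constant. If you want to keep your direct approach, you must (i) prove that the time-commutator term vanishes under the stated hypotheses, and (ii) carry out the boundary absorption argument in detail rather than citing its flavour; otherwise the cleaner path is the reduction via $\Gamma$.
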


\noindent Now, we deal with existence and \emph{a priori} estimates on
solutions to~\eqref{eq:1part} in the case of a merely Lipschitz
continuous $v$.

\begin{proposition}
  \label{prop:2part}
  Let $v \in \C{0,1} ([0,T]; [v_{\min}, +\infty[)$,
  $g \in \Wloc1\infty (\reali; \reali)$,
  $u_o \in (\L1 \cap \BV) (\reali_+; \reali)$ and
  $u_b \in (\L1 \cap \BV) ([0,T]; \reali)$. Then
  problem~\eqref{eq:1part} admits a solution $u$ in the sense of
  Definition~\ref{def:solhl} with the properties:
  \begin{enumerate}
  \item Range of $u$: with the notation in~\eqref{eq:19},
    $u (t,x) \in \mathcal{U} (u_o, {u_b}_{|[0,t]})$ for
    a.e.~$(t,x) \in [0,T] \times \reali_+$. Hence, for all
    $t\in [0,T]$,
    $ \norma{u (t)}_{\L\infty (\reali_+; \reali)} \leq \max \left\{
      \norma{u_o}_{\L\infty (\reali_+; \reali)}, \,
      \norma{u_b}_{\L\infty ([0,t];\reali)} \right\}$.
  \item $u$ is $\L1$--Lipschitz continuous in time: for all
    $t_1,t_2 \in [0,T]$,
    \begin{displaymath}
      \norma{u (t_1) - u (t_2)}_{\L1 (\reali_+; \reali)}
      \leq \tv (t_1 \vee t_2, u_o, u_b) \;
      \norma{v}_{\L\infty ([0,t_1 \vee t_2];\reali)} \,
      \norma{g'}_{\L\infty (\mathcal{U};\reali)} \,
      \modulo{t_2 - t_1},
    \end{displaymath}
    where
    \begin{equation}
      \label{eq:4}
      \tv (t, u_o, u_b)
      =
      \tv (u_o) + \tv (u_b; [0,t]) +\modulo{u_b (0+) - u_o (0+)},
    \end{equation}
    and $\mathcal{U} = \mathcal{U} (u_o, {u_b}_{|[0,t_1 \vee t_2]})$,
    with the notation~\eqref{eq:19}.
  \item Total variation estimate: for all $t \in [0,T]$,
    $\tv\left(u (t) \right) \leq \tv (t, u_o, u_b)$, with the
    notation~\eqref{eq:4}.
  \end{enumerate}
\end{proposition}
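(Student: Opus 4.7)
The plan is dictated by the introduction: exploit the strict positivity and Lipschitz regularity of $v$ to reduce \eqref{eq:1part} to an autonomous IBVP of the form treated in \cite{IBVP1D}, to which Proposition~\ref{prop:Elena} applies. Since $v (t) \geq v_{\min} > 0$ and $v \in \C{0,1}$, the map
\begin{equation*}
  \tau \colon [0,T] \to [0, \tau (T)], \qquad
  \tau (t) = \int_0^t v (s) \, \d{s},
\end{equation*}
is a $\C{1,1}$ diffeomorphism with $\tau' (t) = v (t)$ and inverse $t (\tau)$ satisfying $t' (\tau) = 1/v (t (\tau))$. Using formally $\partial_t = v (t) \, \partial_\tau$, the equation $\partial_t u + \partial_x (v (t) \, g (u)) = 0$ becomes the autonomous conservation law $\partial_\tau \tilde u + \partial_x g (\tilde u) = 0$ for $\tilde u (\tau, x) := u (t (\tau), x)$.

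Accordingly, I would apply Proposition~\ref{prop:Elena} to the autonomous IBVP on $[0, \tau (T)] \times \reali_+$ with flux $g$, initial datum $u_o$, and boundary datum $\tilde u_b (\tau) := u_b (t (\tau))$. Monotonicity and bi-Lipschitzness of $\tau$ yield $\tilde u_b \in (\L1 \cap \BV)([0, \tau (T)]; \reali)$, with $\tv (\tilde u_b) = \tv (u_b)$ and $\mathcal{U} (u_o, \tilde u_{b|[0, \tau]}) = \mathcal{U} (u_o, {u_b}_{|[0, t (\tau)]})$. This produces a solution $\tilde u$ to the autonomous problem together with the analogues of properties~1--3 in the variable $\tau$.

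Setting $u (t, x) := \tilde u (\tau (t), x)$, the three claimed properties transfer by direct substitution. Property~1 is immediate from $\mathcal{U} (u_o, \tilde u_{b|[0, \tau (t)]}) = \mathcal{U} (u_o, {u_b}_{|[0, t]})$; property~3 follows because a monotone time reparametrisation does not alter the spatial total variation of a snapshot; property~2 is obtained from the autonomous $\L1$--Lipschitz estimate in $\tau$ composed with $\modulo{\tau (t_1) - \tau (t_2)} \leq \norma{v}_{\L\infty ([0, t_1 \vee t_2];\reali)} \modulo{t_1 - t_2}$.

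The main obstacle is verifying that $u$ satisfies the entropy inequality of Definition~\ref{def:solhl}. Given $\phi \in \Cc1 (\reali \times \reali; \reali_+)$, its pull-back $\tilde \phi (\tau, x) := \phi (t (\tau), x)$ is in $\Cc1$ because $t$ is $\C{1,1}$, so it is an admissible test function for the autonomous inequality satisfied by $\tilde u$. The change of variable with $\d{t} = \d{\tau}/v (t (\tau))$ and $\partial_t \phi = v \, (\partial_\tau \tilde \phi) \circ \tau$ recasts the volume integral as the autonomous one, since the Jacobian $1/v$ cancels with the factor $v$ coming from the time derivative; the flux integral behaves similarly, exploiting $\sgn{}^\pm (u-k) \bigl(v \, g (u) - v \, g (k)\bigr) = v \, \sgn{}^\pm (u-k) \bigl(g (u) - g (k)\bigr)$; the initial and terminal integrals match because $\tau (0) = 0$. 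The boundary integral is the only term where the two coefficients differ: after the change of variable it reads $\norma{\partial_u f}_{\L\infty} \int_0^{\tau (T)} (\tilde u_b - k)^\pm \tilde \phi (\tau, 0) \, \d{\tau}/v (t (\tau))$, and since $\norma{\partial_u f}_{\L\infty} = \norma{v}_{\L\infty} \norma{g'}_{\L\infty (\mathcal{U})}$ and $v (t (\tau)) \leq \norma{v}_{\L\infty}$, this quantity dominates the autonomous boundary term with coefficient $\norma{g'}_{\L\infty (\mathcal{U})}$. Since $\phi \geq 0$ and $(\cdot)^\pm \geq 0$, the non-autonomous inequality thus inherits its sign from the autonomous one, and the proof is complete.
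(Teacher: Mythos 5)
Your route is the same as the paper's: reparametrise time by $\tau(t)=\int_0^t v$, reduce to an autonomous IBVP, transfer the range, Lipschitz-in-time and TV estimates back through the (bi-Lipschitz, $\C{1,1}$) change of variables, and check Definition~\ref{def:solhl} by pulling back test functions; this is exactly the paper's Lemma~\ref{lem:Gamma} combined with the autonomous well-posedness result, and your one-sided domination of the boundary term (using $\norma{\partial_u f}_{\L\infty}=\norma{v}_{\L\infty}\norma{g'}_{\L\infty(\mathcal{U})}$ and $v(t(\tau))\le\norma{v}_{\L\infty}$) is a correct and in fact cleaner way to get the direction needed for existence.

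The one concrete slip is the citation: Proposition~\ref{prop:Elena} cannot ``produce a solution together with properties 1--3'' --- it is the $\L1$-dependence-on-data (uniqueness/stability) estimate, valid only once solutions with boundary traces are given. The existence of the autonomous solution and the three a priori properties you transfer (range, $\L1$-Lipschitz continuity in $\tau$, total variation bound) come from Proposition~\ref{prop:2} (i.e.\ \cite[Proposition~2.3]{IBVP1D}), which is what the paper invokes. With that substitution your argument is complete and matches the paper's proof.
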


\noindent To conclude this paragraph, we ensure the stability with
respect to the flux of the solution to~\eqref{eq:1part}, under the
hypothesis $v \in \C{0,1} ([0,T]; [v_{\min}, +\infty[)$.

\begin{theorem}
  \label{thm:3part}
  Let $v, \tilde v \in \C{0,1} ([0,T]; [v_{\min}, +\infty[)$. Fix
  $g , \tilde g \in \C1 (\reali; \reali)$.  Let
  $u_o \in (\L1 \cap \BV) (\reali_+; \reali)$,
  $u_b \in (\L1 \cap \BV) ([0,T]; \reali)$. Call $u$ and $\tilde u$
  the solutions to the IBVPs
  \begin{equation}
    \label{eq:quella}
    \left\{
      \begin{array}{r@{\;}c@{\;}l}
        \multicolumn{3}{l}{\partial_t u + \partial_x \left(v(t) \, g(u)\right) = 0}
        \\
        u (0,x)
     & =
     & u_o (x)
        \\
        u (t, 0)
     & =
     & u_b (t)
      \end{array}
    \right. \quad \mbox{ and } \qquad
    \left\{
      \begin{array}{r@{\;}c@{\;}l}
        \multicolumn{3}{l}{\partial_t \tilde u + \partial_x \left(\tilde v(t) \, \tilde g(\tilde u)\right) = 0}
        \\
        \tilde u (0,x)
     & =
     & u_o (x)
        \\
        \tilde u (t, 0)
     & =
     & u_b (t).
      \end{array}
    \right.
  \end{equation}
  where $(t,x) \in [0,T] \times \reali_+$. Then the following
  estimate holds: for all $t \in [0,T]$
  \begin{equation}
    \label{eq:6}
    \norma{u (t)-\tilde u (t)}_{\L1 (\reali_+;\reali)}
    \leq
    \tv (t, u_o, u_b)
    \left(
      A \, t \, \norma{g' - \tilde g'}_{\L\infty (\mathcal{U};\reali)}
      +
      B  \, \norma{v-\tilde v}_{\L1 ([0,t];\reali)}
    \right),
  \end{equation}
  where $\tv (t, u_o, u_b)$ is defined in~\eqref{eq:4},
  $\mathcal{U} = \mathcal{U} (u_o, {u_b}_{\vert [0,t]})$ is as
  in~\eqref{eq:19} and
  \begin{equation}
    \label{eq:15}
    \begin{array}{r@{\;}l@{\qquad\qquad}r@{\;}l}
      A =
      & \max \left\{1,G\right\} \, V,
      & G =
      & \min
        \left\{
        \norma{g'}_{\L\infty (\mathcal{U};\reali)},
        \norma{\tilde g'}_{\L\infty (\mathcal{U};\reali)}
        \right\},
      \\
      B =
      & \left(1 + \dfrac{V}{v_{\min}}\right) \, G,
      & V =
      & \min \left\{
        \norma{v}_{\L\infty ([0,t];\reali)},
        \norma{\tilde v}_{\L\infty ([0,t];\reali)}
        \right\} .
    \end{array}
  \end{equation}
\end{theorem}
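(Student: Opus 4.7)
The plan is to reduce each of the two IBVPs in~\eqref{eq:quella} to an autonomous conservation law on $\reali_+$ via a time diffeomorphism, and then to combine the autonomous $\L1$--stability results of~\cite{IBVP1D} (recalled as Proposition~\ref{prop:Elena}) with an intermediate problem that disentangles the $g$--contribution from the $v$--contribution. Since $v,\tilde v\geq v_{\min}>0$ and are Lipschitz, $\tau(t)=\int_0^t v\,\d{s}$ and $\tilde\tau(t)=\int_0^t \tilde v\,\d{s}$ are bi--Lipschitz diffeomorphisms of $[0,T]$ onto their images. Setting $U(\tau,x)=u(t(\tau),x)$ and $\tilde U(\tilde\tau,x)=\tilde u(\tilde t(\tilde\tau),x)$, a direct computation shows that $U$ and $\tilde U$ solve the autonomous IBVPs $\partial_\tau U+\partial_x g(U)=0$ and $\partial_{\tilde\tau}\tilde U+\partial_x\tilde g(\tilde U)=0$, both with initial datum $u_o$ but with \emph{reparametrised} boundary data $U_b(\tau)=u_b(t(\tau))$ and $\tilde U_b(\tilde\tau)=u_b(\tilde t(\tilde\tau))$, respectively.

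Next I introduce the intermediate problem with flux $v(t)\,\tilde g(\cdot)$: its solution $\hat u$, under the same time change $\tau$, becomes the autonomous $\hat U$ solving $\partial_\tau\hat U+\partial_x\tilde g(\hat U)=0$ with exactly the same initial and boundary data as $U$. Proposition~\ref{prop:Elena} applied between $U$ and $\hat U$ (autonomous stability in the flux only) yields a bound of the form $\tv(t,u_o,u_b)\,\max\{1,G\}\,\tau(t)\,\norma{g'-\tilde g'}_{\L\infty(\mathcal U;\reali)}$, and $\tau(t)\leq V\,t$ supplies the $A\,t\,\norma{g'-\tilde g'}$ term of~\eqref{eq:6}. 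The remaining distance $\norma{\hat u(t)-\tilde u(t)}_{\L1}=\norma{\hat U(\tau(t))-\tilde U(\tilde\tau(t))}_{\L1}$ I split by the triangle inequality inserting $\tilde U(\tau(t))$: the piece $\norma{\hat U(\tau(t))-\tilde U(\tau(t))}_{\L1}$ is the autonomous $\L1$--contraction between two solutions of the \emph{same} equation $\partial_\tau w+\partial_x\tilde g(w)=0$ with different boundary data, controlled by $G\int_0^{\tau(t)}\modulo{u_b(t(\sigma))-u_b(\tilde t(\sigma))}\d{\sigma}$; the piece $\norma{\tilde U(\tau(t))-\tilde U(\tilde\tau(t))}_{\L1}$ is handled by the Lipschitz--in--time estimate for the autonomous solution $\tilde U$ (both provided by Proposition~\ref{prop:Elena}), giving $\tv(t,u_o,u_b)\,G\,\modulo{\tau(t)-\tilde\tau(t)}\leq \tv(t,u_o,u_b)\,G\,\norma{v-\tilde v}_{\L1([0,t];\reali)}$.

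The hard part is the first of these two pieces, i.e.~the $\L1$--comparison of two differently reparametrised copies of the same physical datum $u_b$. Changing variable $\sigma=\tau(s)$ rewrites that integral as $\int_0^t v(s)\,\modulo{u_b(s)-u_b(\theta(s))}\,\d{s}$ with $\theta=\tilde t\circ\tau$ monotone; from the defining identity $\int_0^s v=\int_0^{\theta(s)}\tilde v$ one derives the uniform bound $\modulo{\theta(s)-s}\leq \norma{v-\tilde v}_{\L1}/v_{\min}$, and a Fubini argument exploiting the $\BV$ regularity of $u_b$ then produces the variable--shift estimate
\begin{displaymath}
\int_0^t\modulo{u_b(s)-u_b(\theta(s))}\,\d{s}\leq \frac{\norma{v-\tilde v}_{\L1([0,t];\reali)}}{v_{\min}}\,\tv(u_b;[0,t]).
\end{displaymath}
Collecting the three contributions gives~\eqref{eq:6} with $B=(1+V/v_{\min})\,G$; the $\min$ in the definitions of $V$ and $G$ in~\eqref{eq:15} emerges by symmetry, since the entire argument can equally be run with $\tilde\tau$ as the common time change and $\tilde v(t)\,g(\cdot)$ as intermediate flux, and the smaller of the two resulting bounds is retained.
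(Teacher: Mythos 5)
Your argument is correct and essentially reproduces the paper's proof: the same time reparametrisations $\Gamma$ and $\tilde\Gamma$, the same triangle--inequality splitting that combines flux stability of the autonomous problem, $\L1$--Lipschitz continuity in time, contraction with respect to the boundary datum, and the $\BV$ shift estimate (Lemma~\ref{lem:AFP}), with the symmetric run supplying the minima in~\eqref{eq:15}; your explicit intermediate problem with flux $v\,\tilde g$ just makes explicit the combination of the two cases $v=\tilde v$ and $g=\tilde g$ that the paper treats separately. One slip to fix: the autonomous flux--stability bound and the Lipschitz--in--time bound you invoke are not contained in Proposition~\ref{prop:Elena} (which only gives dependence on initial and boundary data) but are Theorem~\ref{thm:3} and Proposition~\ref{prop:2}, respectively --- the estimates you quote are the right ones, only the attributions need correcting.
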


\begin{remark}
  {\rm In the case of a more regular flux function, say
    $v \in \C1 ([0,T]; [v_{\min}, +\infty[)$, also the stability
    estimate presented in~\cite[Theorem~2.6]{IBVP1D} can be applied,
    obtaining
    \begin{displaymath}
      \norma{u (t)-\tilde u (t)}_{\L1 (\reali_+;\reali)}
      \leq
      \tv (t, u_o, u_b)
      \left(
        A  \, \norma{g' - \tilde g'}_{\L\infty (\mathcal{U};\reali)}
        +
        B  \, \norma{v-\tilde v}_{\L\infty ([0,t];\reali)}
      \right) t,
    \end{displaymath}
    with the same notation introduced in~\eqref{eq:15}.  Clearly, the
    bound~\eqref{eq:6} is more precise. This improvement plays a key
    role in the relaxation of the hypothesis on $v$ achieved in
    Theorem~\ref{thm:main}.}
\end{remark}

\subsection{Discontinuous $\boldsymbol{v}$}
\label{sec:discont}

Our aim is now to further relax the regularity hypothesis on $v$,
allowing also for discontinuous functions. In particular, we set
$v \in \L\infty ([0,T] ; [v_{\min}, + \infty[)$.

\begin{theorem}\label{thm:main}
  Let $v \in \L\infty ([0,T] ; [v_{\min}, + \infty[)$,
  $g \in \C1 (\reali;\reali)$. Fix
  $u_o \in (\L1 \cap \BV) (\reali_+; \reali)$,
  $u_b \in (\L1 \cap \BV) ([0,T]; \reali)$. Then
  problem~\eqref{eq:1part} admits a solution $u$ in the sense of
  Definition~\ref{def:solhl} with the properties:
  \begin{enumerate}
  \item Range of $u$: with the notation in~\eqref{eq:19},
    $u (t,x) \in \mathcal{U} (u_o, {u_b}_{|[0,t]})$ for
    a.e.~$(t,x) \in [0,T] \times \reali_+$. Hence, for all
    $t\in [0,T]$,
    $ \norma{u (t)}_{\L\infty (\reali_+; \reali)} \leq \max \left\{
      \norma{u_o}_{\L\infty (\reali_+; \reali)}, \,
      \norma{u_b}_{\L\infty ([0,t];\reali)} \right\}$.
  \item $u$ is $\L1$--Lipschitz continuous in time: for all
    $t_1,t_2 \in [0,T]$,
    \begin{displaymath}
      \norma{u (t_1) - u (t_2)}_{\L1 (\reali_+; \reali)}
      \leq \tv (t_1 \vee t_2, u_o, u_b) \;
      \norma{v}_{\L\infty ([0,t_1 \vee t_2];\reali)} \,
      \norma{g'}_{\L\infty (\mathcal{U};\reali)} \,
      \modulo{t_2 - t_1},
    \end{displaymath}
    where $\tv (t, u_o,u_b)$ is as in~\eqref{eq:4} and
    $\mathcal{U} = \mathcal{U} (u_o, {u_b}_{|[0,t_1 \vee t_2]})$, with
    the notation~\eqref{eq:19}.
  \item Total variation estimate: for $t {\in} [0,T]$, with the
    notation~\eqref{eq:4},
    $\tv\left(u (t) \right)\! \leq\! \tv (t, u_o, u_b)$.
  \item $\L1$--Lipschitz continuity on initial and boundary data: if
    $\tilde u_o \in (\L1 \cap \BV) (\reali_+; \reali)$,
    $\tilde u_b \in (\L1 \cap \BV) ([0,T]; \reali)$ and $\tilde u$ is
    the corresponding solution to~\eqref{eq:1part}, for all
    $t \in [0,T]$,
    \begin{displaymath}
      \norma{u (t)- \tilde u (t)}_{\L1 (\reali_+;\reali)}
      \leq
      \norma{u_o- \tilde u_o}_{\L1 (\reali_+;\reali)}
      +
      \norma{v}_{\L\infty ([0,t];\reali)}
      \norma{g'}_{\L\infty (\mathcal{U};\reali)}
      \norma{u_b- \tilde u_b}_{\L1 ([0,t];\reali)},
    \end{displaymath}
    where
    $\mathcal{U} = \mathcal{U} ({u_b}_{|[0,t]}, {\tilde u}_{b|[0,t]})$
    as in~\eqref{eq:19}.
  \item $\L1$--stability with respect to $v$ and $g$: if
    $\tilde v \in \L\infty ([0,T] ; [v_{\min}, + \infty[)$,
    $\tilde g \in \C1 (\reali;\reali)$ and $\tilde u$ is the
    corresponding solution to~\eqref{eq:1part}, for all $t \in [0,T]$,
    \begin{displaymath}
      \norma{u (t)-\tilde u (t)}_{\L1 (\reali_+;\reali)}
      \leq
      \tv (t, u_o, u_b)
      \left(
        A \, t \, \norma{g' - \tilde g'}_{\L\infty (\mathcal{U};\reali)}
        +
        B  \, \norma{v-\tilde v}_{\L1 ([0,t];\reali)}
      \right),
    \end{displaymath}
    where $\tv (t, u_o, u_b)$ is defined in~\eqref{eq:4},
    $\mathcal{U} = \mathcal{U} (u_o; {u_b}_{\vert [0,t]})$ is as
    in~\eqref{eq:19} and we use the notation~\eqref{eq:15}.
  \end{enumerate}
\end{theorem}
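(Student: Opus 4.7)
The plan is to approximate $v \in \L\infty$ by a sequence of Lipschitz continuous functions $v_n$ and pass to the limit, leveraging the $\L1$-in-$v$ stability~\eqref{eq:6} of Theorem~\ref{thm:3part}. I would extend $v$ to $\reali$ by the constant $v_{\min}$ outside $[0,T]$ and mollify with a standard kernel, producing $v_n \in \C{0,1}([0,T]; [v_{\min}, +\infty[)$ such that $v_n \to v$ in $\L1$ and $\norma{v_n}_{\L\infty([0,T];\reali)} \leq \norma{v}_{\L\infty([0,T];\reali)}$. For each $n$, Proposition~\ref{prop:2part} produces a solution $u_n$ to~\eqref{eq:1part} with $v$ replaced by $v_n$, satisfying the bounds in items (1)-(3) uniformly in $n$, since those bounds depend on $v$ only through $\norma{v}_{\L\infty}$.

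Applying Theorem~\ref{thm:3part} to the pairs $(v_n,v_m)$, with the same $g$ and the same data, gives
\begin{displaymath}
\norma{u_n(t) - u_m(t)}_{\L1(\reali_+;\reali)} \leq \tv(t,u_o,u_b) \, B \, \norma{v_n - v_m}_{\L1([0,t];\reali)},
\end{displaymath}
with $B$ bounded uniformly in $n,m$. Hence $\{u_n(t)\}$ is Cauchy in $\L1$, uniformly in $t \in [0,T]$; let $u(t)$ be its limit. The stability of Definition~\ref{def:solhl} under $\L1$ convergence, already recalled in the Introduction and cited from~\cite[Chapter~2, Remark~7.33]{MalekEtAlBook}, then ensures that $u$ is a solution to~\eqref{eq:1part} for the limiting $v$: in the entropy inequality written for $u_n$ one can pass to the limit term by term using the compact support of $\phi$ in $x$ together with $v_n \to v$ in $\L1([0,T];\reali)$ and $u_n \to u$ in $\L1([0,T] \times \reali_+;\reali)$.

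For the listed properties: items (1) and (2) transfer from $u_n$ to $u$ by a.e.~convergence together with $\norma{v_n}_{\L\infty} \leq \norma{v}_{\L\infty}$; item (3) follows from the lower semicontinuity of $\tv$ under $\L1$ convergence. Item (4) is obtained by applying Proposition~\ref{prop:nonso} to pairs $u_n, \tilde u_n$ built from the same Lipschitz $v_n$ but with data $(u_o, u_b)$ and $(\tilde u_o, \tilde u_b)$ respectively -- the uniform spatial BV bound from item (3) provides the traces at $x=0+$ required by the hypotheses -- and then letting $n \to \infty$. Item (5) is obtained along the same lines: approximate both $v$ and $\tilde v$ by Lipschitz sequences $v_n, \tilde v_n$ converging in $\L1$ with preserved $\L\infty$ bounds, apply Theorem~\ref{thm:3part} to the corresponding pairs $(v_n, g)$ and $(\tilde v_n, \tilde g)$, and pass to the limit; the constants $A$ and $B$ in~\eqref{eq:15} involve only $\L\infty$ norms, which are stable under mollification.

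The main obstacle, and the reason the refined estimate~\eqref{eq:6} was proved rather than the $\L\infty$-in-$v$ version reported in the Remark after Theorem~\ref{thm:3part}, is precisely the Cauchy step: a merely $\L\infty$ function admits no Lipschitz approximation with any rate of $\L\infty$ convergence, whereas $\L1$ convergence of the mollifiers is automatic. Once the Cauchy property is secured, the remaining passages to the limit are routine.
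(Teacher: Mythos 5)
Your proposal is correct and follows essentially the same route as the paper: mollify $v$ after extending it by $v_{\min}$, use Proposition~\ref{prop:2part} for the approximate solutions, the $\L1$-in-$v$ estimate of Theorem~\ref{thm:3part} for the Cauchy property in $\C0([0,T];\L1)$, pass to the limit term by term in the entropy inequality, and recover points 1--5 by the same limiting arguments (including Proposition~\ref{prop:nonso} for point 4 and Theorem~\ref{thm:3part} for point 5). Your remark that the refined $\L1$-in-$v$ bound~\eqref{eq:6}, rather than the $\L\infty$-in-$v$ version, is what makes the Cauchy step work matches the paper's own motivation.
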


\noindent The proof is deferred to \S~\ref{subs:disc}.

\section{The Case of a Segment}
\label{sec:seg}

In this Section we focus on the IBVP~\eqref{eq:1sgPart} where $x$
varies in a segment. We follow the same structure of
Section~\ref{sec:Main} and provide all statements in details, though
omitting the proofs, since they are entirely analogous to those
presented in \S~\ref{sec:vLip} and \S~\ref{subs:disc} for the case of
the half line, now relying on~\cite[\S~3.1]{IBVP1D}.

Consider first the more general IBVP on a segment
\begin{equation}
  \label{eq:1sg}
  \left\{
    \begin{array}{l@{\,}c@{\,}lr@{\,}c@{\,}l}
      \multicolumn{3}{l}{%
      \partial_t u + \partial_x f (t,u) = 0\qquad\ }
      &  (t,x)
      & \in
      & [0,T]  \times [0,L]
      \\
      u (0,x)
      & =
      & u_o (x)
      & x
      & \in
      & [0,L]
      \\
      u (t, 0)
      & =
      & u_{b,1} (t)
      &  t
      & \in & [0,T]
      \\
      u (t,L)
      & =
      & u_{b,2} (t)
      & t
      & \in
      & [0,T].
    \end{array}
  \right.
\end{equation}
The definition of solution to~\eqref{eq:1sg} is analogous to
Definition~\ref{def:solhl}. Here we have one more term, due to the
boundary $x = L$.

\begin{definition}
  \label{def:solsg}
  A \emph{solution} to the IBVP~\eqref{eq:1sg} is a map
  $u \in \L\infty ([0,T] \times [0,L]; \reali)$ such that for any
  $k \in \reali$ and for any test function
  $\phi \in \Cc1 (\reali \times \reali; \reali_+)$
  \begin{align*}
    & \int_0^T \!\!\! \int_0^L
      \left\{
      \left(u (t,x) - k\right)^\pm  \partial_t\phi (t,x)
      +
      \sgn{}^\pm \!\left(u (t,x) - k \right)\!
      \left(f \left(t, u (t,x)\right) - f (t,k)\right)  \partial_x \phi (t,x)
      \right\}
      \d{x} \d{t}
    \\
    & + \int_0^L \left(u_o (x) - k\right)^\pm  \phi (0, x) \,
      \d{x}
      - \int_0^L \left(u (T,x) - k\right)^\pm \phi (T, x) \,
      \d{x}
    \\
    & + \norma{\partial_u f}_{\L\infty ([0,T] \times \mathcal{U}; \reali)}
      \int_0^T \left\{
      \left(u_{b,1} (t) - k\right)^\pm \phi (t,0)
      +
      \left(u_{b,2} (t) - k\right)^\pm \phi (t,L)
      \right\} \d{t}
      \geq  0,
  \end{align*}
  where
  $\mathcal{U} = \mathcal{U} (u_o, {u_{b,1}}_{|[0,T]},
  {u_{b,2}}_{|[0,T]})$ as in~\eqref{eq:19}.
\end{definition}
\noindent Throughout, we denote $\ub = \left(u_{b,1}, u_{b,2}\right)$,
so that $\mathcal{U}=\mathcal{U} (u_o, {\ub}_{|[0,T]})$.

\subsection{Lipschitz continuous $\boldsymbol{v}$}
\label{sec:lipschitzsg}

We focus on problem~\eqref{eq:1sgPart} under the assumption
$v \in \C{0,1} ([0,T]; [v_{\min}, +\infty[)$.

The following Proposition extends~\cite[Proposition~3.2]{IBVP1D} to
the present setting of less regularity assumptions on the flux. It is
the analogous to Proposition~\ref{prop:nonso}, but the half line is
here replaced by a segment.


\begin{proposition}
  \label{prop:nonsosg}
  Let $v \in \C{0,1} ([0,T]; [v_{\min}, +\infty[)$. Fix
  $g \in \C1 (\reali; \reali)$. Let
  $u_o, \tilde u_o \in (\L1 \cap \BV) ([0,L]; \reali)$,
  $\ub, \boldsymbol{\tilde u_b} \in (\L1 \cap \BV) ([0,T];
  \reali^2)$. Assume that the IBVPs
  \begin{displaymath}
    \left\{
      \begin{array}{l}
        \partial_t u + \partial_x \left(v(t) \, g(u)\right) = 0
        \\
        u (0,x)
        =
        u_o (x)
        \\
        \left(u (t, 0), u (t,L)\right)
        =
        \ub (t)
      \end{array}
    \right.
    \quad \mbox{ and } \quad
    \left\{
      \begin{array}{l}
        \partial_t \tilde u + \partial_x \left(v(t) \, g(\tilde u)\right) = 0
        \\
        \tilde u (0,x)
        =
        \tilde u_o (x)
        \\
        \left(\tilde u (t, 0), \tilde u (t,L) \right)
        =
        \boldsymbol{\tilde u_b} (t)
      \end{array}
    \right.
  \end{displaymath}
  admit solutions
  $u,\tilde u \in \L\infty ([0,T] \times [0,L]; \reali)$ in the sense
  of Definition~\ref{def:solsg}, such that $u$ and $\tilde u$ both
  admit a trace for $x \to 0+$ and at $x \to L-$ for
  a.e.~$t \in [0,T]$. Then, for all $t \in [0,T]$,
  \begin{displaymath}
    \norma{u (t) \! - \! \tilde u (t)}_{\L1 ([0,L]; \reali)}
    \leq
    \norma{u_o  \! - \! \tilde u_o}_{\L1 ([0,L]; \reali)}
    +
    \norma{v}_{\L\infty ([0,t];\reali)}
    \norma{g'}_{\L\infty (\mathcal{U};\reali)}\!
    \sum_{i=1}^2\norma{u_{b.i} - \tilde u_{b,i}}_{\L1 ([0,t]; \reali)},
  \end{displaymath}
  where
  $\mathcal{U} = \mathcal{U} ( {\ub}_{|[0,t]}, \boldsymbol{\tilde
    u_b}_{|[0,t]} )$ is as in~\eqref{eq:19}.
\end{proposition}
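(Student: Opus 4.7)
The plan is to follow the strategy outlined in the Introduction, already used in the proof of Proposition~\ref{prop:nonso}, replacing the half-line reference~\cite[Proposition~2.2]{IBVP1D} by its segment counterpart~\cite[Proposition~3.2]{IBVP1D}. Namely, one reduces the non-autonomous IBVP to an autonomous one via an explicit time rescaling, invokes the $\L1$--Lipschitz dependence already known in the autonomous setting on $[0,L]$, and undoes the rescaling in the final estimate.

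Define the time change $\tau \colon [0,T] \to [0,\tau(T)]$ by $\tau(t) = \int_0^t v(s)\,\d{s}$. Since $v \in \C{0,1} ([0,T]; [v_{\min}, +\infty[)$, this is a bi--Lipschitz increasing diffeomorphism; call $\hat t$ its inverse. Set $\hat u(\tau, x) = u(\hat t(\tau), x)$, $\hat u_{b,i}(\tau) = u_{b,i}(\hat t(\tau))$ for $i=1,2$, and analogously for the tilded quantities. A direct computation based on $\d{\tau} = v(t)\,\d{t}$ and the chain rule applied to a test function of the form $\phi(\hat t(\tau), x)$ shows that $u$ solves the IBVP in the sense of Definition~\ref{def:solsg} if and only if $\hat u$ solves the autonomous IBVP
\begin{displaymath}
  \partial_\tau \hat u + \partial_x g(\hat u) = 0,
  \qquad
  \hat u(0,x) = u_o(x),
  \qquad
  \left(\hat u(\tau, 0), \hat u(\tau, L)\right) = \left(\hat u_{b,1}(\tau), \hat u_{b,2}(\tau)\right)
\end{displaymath}
on $[0,\tau(T)] \times [0,L]$ in the sense of~\cite[\S~3.1]{IBVP1D}. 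The key point is that the prefactor $\norma{\partial_u f}_{\L\infty ([0,T] \times \mathcal{U}; \reali)} = \norma{v}_{\L\infty ([0,T]; \reali)}\, \norma{g'}_{\L\infty (\mathcal{U}; \reali)}$ weighting the boundary contributions in Definition~\ref{def:solsg} collapses exactly to $\norma{g'}_{\L\infty (\mathcal{U}; \reali)}$ once the factor of $v$ arising from $\d{t} = v(s)^{-1}\,\d{\tau}$ is absorbed.

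Apply now~\cite[Proposition~3.2]{IBVP1D} to $\hat u$ and $\hat{\tilde u}$: for every $\sigma \in [0,\tau(T)]$,
\begin{displaymath}
  \norma{\hat u(\sigma) - \hat{\tilde u}(\sigma)}_{\L1 ([0,L]; \reali)}
  \leq
  \norma{u_o - \tilde u_o}_{\L1 ([0,L]; \reali)}
  +
  \norma{g'}_{\L\infty (\mathcal{U};\reali)}
  \sum_{i=1}^2 \norma{\hat u_{b,i} - \hat{\tilde u}_{b,i}}_{\L1 ([0,\sigma]; \reali)} .
\end{displaymath}
Fix $t \in [0,T]$, specialise to $\sigma = \tau(t)$, and change variable $\tau = \tau(s)$ in each boundary integral on the right:
\begin{displaymath}
  \int_0^{\tau(t)} \modulo{\hat u_{b,i}(\tau) - \hat{\tilde u}_{b,i}(\tau)} \,\d{\tau}
  =
  \int_0^t \modulo{u_{b,i}(s) - \tilde u_{b,i}(s)}\, v(s)\,\d{s}
  \leq
  \norma{v}_{\L\infty ([0,t]; \reali)}\, \norma{u_{b,i} - \tilde u_{b,i}}_{\L1 ([0,t]; \reali)}.
\end{displaymath}
Since $\hat u(\tau(t), \cdot) = u(t, \cdot)$ and analogously for $\tilde u$, the left hand side coincides with $\norma{u(t) - \tilde u(t)}_{\L1 ([0,L]; \reali)}$, and the desired estimate follows.

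The only genuine technicality is the verification, in the second paragraph, that solutions in the sense of Definition~\ref{def:solsg} really are transformed into solutions of the autonomous problem on $[0, \tau(T)] \times [0,L]$ by the time change, carefully matching \emph{both} boundary terms at $x=0$ and at $x=L$ and the corresponding traces for $x \to 0+$ and $x \to L-$. This computation is, however, a literal transcription of the one performed for Proposition~\ref{prop:nonso}, with the new boundary $x = L$ treated in exactly the same way as $x = 0$, so it introduces no conceptual novelty; the remaining application of~\cite[Proposition~3.2]{IBVP1D} and the change of variable in the boundary $\L1$ norms are routine.
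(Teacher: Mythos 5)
Your proposal is correct and coincides with the proof the paper intends: the paper omits it precisely because it is the segment transcription of the argument for Proposition~\ref{prop:nonso}, i.e.~the time change $\Gamma$ of Lemma~\ref{lem:Gamma} (with the boundary $x=L$ treated like $x=0$) followed by~\cite[Proposition~3.2]{IBVP1D} and the change of variable in the boundary integrals, which produces the factor $\norma{v}_{\L\infty ([0,t];\reali)}$. The only imprecision is the parenthetical claim that the boundary weight collapses \emph{exactly} to $\norma{g'}_{\L\infty (\mathcal{U};\reali)}$: the rescaling produces the pointwise factor $v(t)$ rather than its sup norm, which is why the paper's Lemma~\ref{lem:Gamma} carries out the verification on the trace form of the boundary term, but this does not affect your final estimate.
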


\noindent The analogous to Proposition~\ref{prop:2part} reads as
follows.
\begin{proposition}
  \label{prop:2sgPart}
  Let $v \in \C{0,1} ([0,T]; [v_{\min}, +\infty[)$ and
  $g \in \Wloc1\infty (\reali; \reali)$. Fix
  $u_o \in (\L1 \cap \BV) ([0,L]; \reali)$ and
  $\ub \in (\L1 \cap \BV) ([0,T]; \reali^2)$. Then
  problem~\eqref{eq:1sgPart} admits a solution $u$ in the sense of
  Definition~\ref{def:solsg} with the properties:
  \begin{enumerate}
  \item Range of $u$: with the notation in~\eqref{eq:19},
    $u (t,x) \in \mathcal{U} (u_o, {\ub}_{|[0,t]})$ for
    a.e.~$(t,x) \in [0,T] \times [0,L]$. Hence, for all $t\in [0,T]$,
    \begin{displaymath}
      \norma{u (t)}_{\L\infty  ([0,L]; \reali)} \leq
      \max \left\{ \norma{u_o}_{\L\infty ([0,L]; \reali)}, \,
        \norma{u_{b,1}}_{\L\infty ([0,t];\reali)}, \,
        \norma{u_{b,2}}_{\L\infty ([0,t];\reali)}
      \right\}.
    \end{displaymath}
  \item $u$ is $\L1$--Lipschitz continuous in time: for all
    $t_1,t_2 \in [0,T]$,
    \begin{displaymath}
      \norma{u (t_1) - u (t_2)}_{\L1 ([0,L]; \reali)}
      \leq \tv (t_1\vee t_2, u_o, \ub) \;
      \norma{v}_{\L\infty ([0,t_1 \vee t_2];\reali)} \,
      \norma{g'}_{\L\infty (\mathcal{U};\reali)} \,
      \modulo{t_2 - t_1},
    \end{displaymath}
    where
    \begin{equation}
      \label{eq:4sg}
      \tv (t, u_o, \ub) {=}
      \tv (u_o) + \tv (\ub; [0,t])
      + \modulo{u_{b,1} (0+) - u_o (0+)} +\modulo{u_{b,2} (0+) - u_o (L-)},
    \end{equation}
    and $\mathcal{U} = \mathcal{U} (u_o, {\ub}_{|[0,t_1 \vee t_2]})$,
    with the notation~\eqref{eq:19}.
  \item Total variation estimate: for all $t \in [0,T]$,
    $\tv\left(u (t) \right) \leq \tv (t, u_o, \ub)$, with the
    notation~\eqref{eq:4sg}.
  \end{enumerate}
\end{proposition}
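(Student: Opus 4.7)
The plan is to mirror the half-line argument of \S\ref{sec:vLip} with a time-reparametrisation that reduces \eqref{eq:1sgPart} to an autonomous problem on the segment $[0,L]$, for which the results of \cite[\S~3.1]{IBVP1D} apply directly. Concretely, I would set
\begin{equation*}
  \tau = \Phi(t) := \int_0^t v(s)\, \d{s},
\end{equation*}
which, thanks to $v \in \C{0,1}([0,T];[v_{\min},+\infty[)$ with $v_{\min}>0$, is a bi-Lipschitz increasing diffeomorphism from $[0,T]$ onto $[0,\Phi(T)]$. Defining $U(\tau,x) = u(\Phi^{-1}(\tau),x)$, $U_{b,i}(\tau) = u_{b,i}(\Phi^{-1}(\tau))$ for $i=1,2$, and keeping $u_o$ unchanged, a direct chain-rule computation turns the equation $\partial_t u + \partial_x(v(t)\,g(u))=0$ into the autonomous law $\partial_\tau U + \partial_x g(U) = 0$ on $[0,\Phi(T)]\times[0,L]$, with initial datum $u_o$ and boundary data $U_{b,1},U_{b,2}$.

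The heart of the argument is to verify that this change of variable preserves the Kruzhkov-type formulation of Definition~\ref{def:solsg}: pulling back the test function via $\phi(t,x) = \psi(\Phi(t),x)$ and using $\d\tau = v(t)\,\d{t}$, the bulk integrand transforms with a common factor $v(t)$ that combines the term $(u-k)^\pm \partial_t\phi$ (producing $(U-k)^\pm \partial_\tau \psi$) with $\sgn^\pm(u-k)(v(t)g(u)-v(t)g(k))\partial_x\phi$ (producing $\sgn^\pm(U-k)(g(U)-g(k))\partial_x\psi$), while the boundary coefficient $\norma{\partial_u f}_{\L\infty} = \norma{v}_{\L\infty}\norma{g'}_{\L\infty}$ for the original problem becomes $\norma{g'}_{\L\infty}$ for the autonomous one, with the factor $\norma{v}_{\L\infty}$ absorbed by the Jacobian $v(t)$ at the boundaries $x=0,L$. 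This computation is identical in spirit to the one carried out in \S\ref{sec:vLip} for the half-line, with the extra boundary $x=L$ handled in the same way as $x=0$.

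Having established the equivalence, I would apply \cite[\S~3.1]{IBVP1D} to obtain a solution $U$ of the autonomous problem together with:
(a) the range bound $U(\tau,x) \in \mathcal{U}(u_o,{\ub}_{|[0,\Phi^{-1}(\tau)]})$, which is invariant under the time change;
(b) the autonomous total variation bound $\tv(U(\tau,\cdot)) \leq \tv(u_o) + \tv(U_b;[0,\tau]) + \modulo{U_{b,1}(0+)-u_o(0+)} + \modulo{U_{b,2}(0+)-u_o(L-)}$;
(c) the autonomous time Lipschitz estimate $\norma{U(\tau_1)-U(\tau_2)}_{\L1([0,L];\reali)} \leq \tv_{\rm aut}\cdot \norma{g'}_{\L\infty(\mathcal{U};\reali)}\cdot\modulo{\tau_1-\tau_2}$.
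Setting $u(t,x) := U(\Phi(t),x)$ gives item~(1) unchanged. For item~(3), since $\Phi$ is a strictly increasing reparametrisation, $\tv(U_b;[0,\Phi(t)]) = \tv(\ub;[0,t])$, so the autonomous bound transcribes exactly into \eqref{eq:4sg}. For item~(2), combine the autonomous Lipschitz bound with $\modulo{\Phi(t_1)-\Phi(t_2)} \leq \norma{v}_{\L\infty([0,t_1\vee t_2];\reali)}\modulo{t_1-t_2}$.

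The main obstacle is the careful bookkeeping in the entropy inequality transformation, particularly at the boundary $x=L$ (the new feature compared to \S\ref{sec:vLip}): one must check that the Jacobian factor $v(t)$ introduced by $\d{t} = v(t)^{-1}\d\tau$ combines consistently with the $\norma{\partial_u f}_{\L\infty}$ prefactor appearing in Definition~\ref{def:solsg}, so that the resulting inequality for $U$ is precisely the one required to apply \cite[\S~3.1]{IBVP1D}. Once this equivalence is in place, items (1)--(3) follow mechanically, and the argument is \emph{verbatim} the same as the half-line case treated in \S\ref{sec:vLip}, with the only difference being the presence of the additional boundary term at $x=L$ and the corresponding extra contribution $\modulo{u_{b,2}(0+)-u_o(L-)}$ in \eqref{eq:4sg}.
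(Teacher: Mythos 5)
Your proposal is correct and is essentially the proof the paper intends: the segment case is stated without proof precisely because it is the verbatim analogue of \S~\ref{sec:vLip}, namely the time change $\Gamma^{-1}(t)=\int_0^t v(s)\,\d{s}$ (your $\Phi$), the equivalence of Lemma~\ref{lem:Gamma} extended to include the boundary $x=L$, the autonomous segment results of~\cite[\S~3.1]{IBVP1D}, and the transfer of the three estimates exactly as you describe. One caution on the bookkeeping you yourself flag: the constant prefactor $\norma{v}_{\L\infty}\norma{g'}_{\L\infty}$ in Definition~\ref{def:solsg} is not cancelled pointwise by the Jacobian $v(t)$, but in the direction needed for this proposition (autonomous solution $\Rightarrow$ solution of~\eqref{eq:1sgPart}) the bound $v(t)\le\norma{v}_{\L\infty}$ makes the transformed boundary terms dominated by those required in the definition, which is all the existence argument uses (the two-way equivalence is what the paper's Lemma~\ref{lem:Gamma} handles via boundary traces).
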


\noindent We conclude this paragraph with the following Theorem,
stating the stability with respect to the flux of the solution
to~\eqref{eq:1sgPart}, under the hypothesis
$v \in \C{0,1} ([0,T]; [v_{\min}, +\infty[)$.
\begin{theorem}
  \label{thm:3sgPart}
  Let $v, \tilde v \in \C{0,1} ([0,T]; [v_{\min}, +\infty[)$. Fix
  $g , \tilde g \in \C1 (\reali; \reali)$.  Let
  $u_o \in (\L1 \cap \BV) ([0,L]; \reali)$,
  $\ub \in (\L1 \cap \BV) ([0,T]; \reali^2)$. Call $u$ and $\tilde u$
  the solutions to the IBVP~\eqref{eq:1sgPart}, with flux $v\,g$ and
  $\tilde v \, \tilde g$ respectively. Then the following estimate
  holds: for all $t \in [0,T]$
  \begin{displaymath}
    \norma{u (t)-\tilde u (t)}_{\L1 ([0,L];\reali)}
    \leq
    \tv (t, u_o, \ub)
    \left(
      A \, t \, \norma{g' - \tilde g'}_{\L\infty (\mathcal{U};\reali)}
      +
      B  \, \norma{v-\tilde v}_{\L1 ([0,t];\reali)}
    \right),
  \end{displaymath}
  where $\tv (t, u_o, \ub)$ is defined in~\eqref{eq:4sg},
  $\mathcal{U} = \mathcal{U} (u_o; {\ub}_{\vert [0,t]})$ is as
  in~\eqref{eq:19}, $A$ and $B$ are defined in~\eqref{eq:15}.
\end{theorem}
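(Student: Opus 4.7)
The plan is to mirror the half line proof of Theorem~\ref{thm:3part}, replacing the half line by the segment $[0,L]$, Proposition~\ref{prop:nonso} by Proposition~\ref{prop:nonsosg}, and the autonomous theory of~\cite[\S~2.1]{IBVP1D} by that of~\cite[\S~3.1]{IBVP1D}. The core device is the time reparametrisation $\tau(t) = \int_0^t v(s)\,\d{s}$ and $\tilde\tau(t) = \int_0^t \tilde v(s)\,\d{s}$; since $v, \tilde v \geq v_{\min} > 0$ and both are Lipschitz, these are bi-Lipschitz diffeomorphisms onto their images. Setting $U(\tau,x) = u(t(\tau),x)$ and $\widetilde U(\sigma,x) = \tilde u(\tilde t(\sigma),x)$, a direct check from Definition~\ref{def:solsg} shows that $U$ and $\widetilde U$ solve the autonomous segment IBVPs $\partial_\tau U + \partial_x g(U) = 0$ and $\partial_\sigma \widetilde U + \partial_x \tilde g(\widetilde U) = 0$ respectively, with time-rescaled boundary data, placing both problems squarely in the scope of the autonomous theory of~\cite[\S~3.1]{IBVP1D}.

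Then I would split via an intermediate solution: let $w$ solve~\eqref{eq:1sgPart} with flux $v(t)\,\tilde g(w)$ and the same data $u_o, \ub$, whose existence and \emph{a priori} estimates are granted by Proposition~\ref{prop:2sgPart}. Triangulating,
\begin{displaymath}
  \norma{u(t) - \tilde u(t)}_{\L1([0,L];\reali)}
  \leq
  \norma{u(t) - w(t)}_{\L1([0,L];\reali)}
  +
  \norma{w(t) - \tilde u(t)}_{\L1([0,L];\reali)},
\end{displaymath}
the first summand is handled by applying the common rescaling $\tau$ to $u$ and $w$: they become solutions of autonomous segment IBVPs with identical data but fluxes $g$ and $\tilde g$, so the autonomous flux-stability from~\cite[\S~3.1]{IBVP1D} yields a bound by $\tv(t,u_o,\ub)\,\tau(t)\,\norma{g'-\tilde g'}_{\L\infty(\mathcal{U};\reali)}$, at most $A\,t\,\tv(t,u_o,\ub)\,\norma{g'-\tilde g'}_{\L\infty(\mathcal{U};\reali)}$ by~\eqref{eq:15}. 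For the second summand, rescaling $w$ by $\tau$ and $\tilde u$ by $\tilde\tau$ yields two solutions of the \emph{same} autonomous equation $\partial_\sigma W + \partial_x \tilde g(W) = 0$, but on mismatched time intervals with $\modulo{\tau(s) - \tilde\tau(s)} \leq \norma{v-\tilde v}_{\L1([0,s];\reali)}$. Combining Proposition~\ref{prop:nonsosg} with the total variation bound of Proposition~\ref{prop:2sgPart} applied to $w$ and $\tilde u$ converts this time-gap into a bound of the form $B\,\tv(t,u_o,\ub)\,\norma{v-\tilde v}_{\L1([0,t];\reali)}$, with $B$ as in~\eqref{eq:15}.

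The main obstacle is the bookkeeping in the second summand: $w$ and $\tilde u$ live on incompatible rescaled time variables, so one must express their difference at the common physical time $t$ through the gap $\tau - \tilde\tau$ and integrate it against $\tv$-controlled data in order to recover the $\L1$- rather than $\L\infty$-norm of $v - \tilde v$, which is precisely the sharpening needed to later reach Theorem~\ref{thm:main}. The additional boundary at $x = L$ introduces no new difficulty, as its contribution enters symmetrically through the second component of $\ub$ in both Propositions~\ref{prop:nonsosg} and~\ref{prop:2sgPart}.
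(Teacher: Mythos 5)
Your plan coincides with the paper's: the segment proof is omitted there precisely because it is the same time-reparametrisation argument as for Theorem~\ref{thm:3part} --- rescale by $\Gamma^{-1}(t)=\int_0^t v(s)\d{s}$, treat the $g$-dependence via the autonomous flux-stability of~\cite[\S~3.1]{IBVP1D}, and treat the $v$-dependence by splitting into a time-shift term (the $\L1$--Lipschitz continuity in time of the rescaled solution, fed by the TV bound) plus a boundary-data term handled through the autonomous analogue of Proposition~\ref{prop:nonsosg} together with the BV-shift estimate of Lemma~\ref{lem:AFP}, exactly the mechanism you describe for converting the gap $\tau-\tilde\tau$ into $\norma{v-\tilde v}_{\L1([0,t];\reali)}$. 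Your explicit triangulation through the intermediate flux $v\,\tilde g$ is simply the combination of the two pure cases ($v=\tilde v$ and $g=\tilde g$) that the paper leaves implicit, so the approach is essentially identical.
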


\subsection{Discontinuous $\boldsymbol{v}$}
\label{sec:discontsg}

We now relax the regularity hypothesis on $v$, allowing also for
discontinuous functions. In particular, we consider
problem~\eqref{eq:1sgPart} with
$v \in \L\infty ([0,T] ; [v_{\min}, + \infty[)$.

\begin{theorem}\label{thm:mainsg}
  Let $v \in \L\infty ([0,T] ; [v_{\min}, + \infty[)$,
  $g \in \C1 (\reali;\reali)$,
  $u_o \in (\L1 \cap \BV) ([0,L]; \reali)$ and
  $\ub \in (\L1 \cap \BV) ([0,T]; \reali^2)$. Then
  problem~\eqref{eq:1sgPart} admits a solution $u$ in the sense of
  Definition~\ref{def:solsg} with the properties:
  \begin{enumerate}
  \item Range of $u$: with the notation in~\eqref{eq:19},
    $u (t,x) \in \mathcal{U} (u_o, {\ub}_{|[0,t]})$ for
    a.e.~$(t,x) \in [0,T] \times [0,L]$. Hence, for all $t\in [0,T]$,
    \begin{displaymath}
      \norma{u (t)}_{\L\infty ([0,L]; \reali)} \leq \max \left\{
        \norma{u_o}_{\L\infty ([0,L]; \reali)}, \,
        \norma{u_{b,1}}_{\L\infty ([0,t];\reali)}, \,
        \norma{u_{b,2}}_{\L\infty ([0,t];\reali)} \right\}.
    \end{displaymath}
  \item $u$ is $\L1$--Lipschitz continuous in time: for all
    $t_1,t_2 \in [0,T]$,
    \begin{displaymath}
      \norma{u (t_1) - u (t_2)}_{\L1 ([0,L]; \reali)}
      \leq \tv (t_1 \vee t_2, u_o, \ub) \;
      \norma{v}_{\L\infty ([0,t_1 \vee t_2];\reali)} \,
      \norma{g'}_{\L\infty (\mathcal{U};\reali)} \,
      \modulo{t_2 - t_1},
    \end{displaymath}
    where $\tv (t, u_o, \ub)$ is as in~\eqref{eq:4sg} and
    $\mathcal{U} = \mathcal{U} (u_o, {\ub}_{|[0,t_1 \vee t_2]})$, with
    the notation~\eqref{eq:19}.
  \item Total variation estimate: for all $t \in [0,T]$, with the
    notation~\eqref{eq:4},
    $\tv\left(u (t) \right) \leq \tv (t, u_o, \ub)$, where
    $\tv (t, u_o, \ub)$ is as in~\eqref{eq:4sg}.
  \item $\L1$--Lipschitz continuity on initial and boundary data: if
    $\tilde u_o \in (\L1 \cap \BV) ([0,L]; \reali)$,
    $\boldsymbol{\tilde u_b} \in (\L1 \cap \BV) ([0,T]; \reali^2)$ and
    $\tilde u$ is the corresponding solution to~\eqref{eq:1sgPart},
    for all $t \in [0,T]$,
    \begin{align*}
      \norma{u (t) \! - \! \tilde u (t)}_{\L1 ([0,L]; \reali)}
      \leq \
      &\norma{u_o  \! - \! \tilde u_o}_{\L1 ([0,L]; \reali)}
      \\
      &+
        \norma{v}_{\L\infty ([0,t];\reali)}
        \norma{g'}_{\L\infty (\mathcal{U};\reali)}
        \sum_{i=1}^2\norma{u_{b.i} - \tilde u_{b,i}}_{\L1 ([0,t]; \reali)},
    \end{align*}
    where
    $\mathcal{U} = \mathcal{U} ({\ub}_{|[0,t]}, \boldsymbol{\tilde
      u_b}_{|[0,t]})$ as in~\eqref{eq:19}.
  \item $\L1$--stability with respect to $v$ and $g$: if
    $\tilde v \in \L\infty ([0,T] ; [v_{\min}, + \infty[)$,
    $\tilde g \in \C1 (\reali;\reali)$ and $\tilde u$ is the
    corresponding solution to~\eqref{eq:1sgPart}, for all
    $t \in [0,T]$,
    \begin{displaymath}
      \norma{u (t)-\tilde u (t)}_{\L1 ([0,L];\reali)}
      \leq
      \tv (t, u_o, \ub)
      \left(
        A \, t \, \norma{g' - \tilde g'}_{\L\infty (\mathcal{U};\reali)}
        +
        B  \, \norma{v-\tilde v}_{\L1 ([0,t];\reali)}
      \right),
    \end{displaymath}
    where $\tv (t, u_o, \ub)$ is defined in~\eqref{eq:4sg},
    $\mathcal{U} = \mathcal{U} (u_o; {\ub}_{\vert [0,t]})$ is as
    in~\eqref{eq:19} and we use the notation~\eqref{eq:15}.
  \end{enumerate}
\end{theorem}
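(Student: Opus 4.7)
The plan is to mirror, \emph{mutatis mutandis}, the argument for the half line case (Theorem~\ref{thm:main} in \S~\ref{subs:disc}), replacing Proposition~\ref{prop:2part}, Theorem~\ref{thm:3part}, Proposition~\ref{prop:nonso} and Definition~\ref{def:solhl} with their segment counterparts Proposition~\ref{prop:2sgPart}, Theorem~\ref{thm:3sgPart}, Proposition~\ref{prop:nonsosg} and Definition~\ref{def:solsg}. The extra boundary term at $x=L$ is handled in exactly the same way as the one at $x=0$, since Theorem~\ref{thm:3sgPart} provides the same $\L1$-in-time control with respect to $v-\tilde v$ that was available in the half line setting.

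Concretely, first I would approximate the given $v\in\L\infty([0,T];[v_{\min},+\infty[)$ by a sequence $v_n\in\C{0,1}([0,T];[v_{\min},+\infty[)$ obtained by a standard mollification that preserves the lower bound $v_{\min}$, the $\L\infty$ norm and yields $v_n\to v$ in $\L1([0,T];\reali)$. For each $n$, Proposition~\ref{prop:2sgPart} applied with flux $v_n\,g$ produces a solution $u_n$ of~\eqref{eq:1sgPart} satisfying the range, $\L1$-Lipschitz time continuity and total variation bounds stated in items~1, 2, 3, all with constants depending only on $\norma{v}_{\L\infty}$, $\tv(T,u_o,\ub)$ and $\norma{g'}_{\L\infty(\mathcal{U};\reali)}$, hence uniform in~$n$. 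Applying Theorem~\ref{thm:3sgPart} to the pair $(u_n,u_m)$, with common $g$ and distinct $v_n,v_m$, yields
\begin{displaymath}
  \norma{u_n(t)-u_m(t)}_{\L1([0,L];\reali)}
  \leq \tv(t,u_o,\ub)\,B\,\norma{v_n-v_m}_{\L1([0,t];\reali)},
\end{displaymath}
so $\{u_n\}$ is Cauchy in $\C0([0,T];\L1([0,L];\reali))$ and converges to a limit $u$ which automatically inherits the range bound (item~1), the total variation estimate (item~3, via lower semicontinuity of $\tv$ under $\L1$ convergence) and the $\L1$-Lipschitz continuity in time (item~2), since these bounds are uniform in~$n$.

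To conclude that $u$ solves~\eqref{eq:1sgPart} in the sense of Definition~\ref{def:solsg}, I would pass to the limit in the Kruzhkov-type inequality written for each $u_n$ with flux $f_n(t,\xi)=v_n(t)g(\xi)$. The volume and initial/final terms pass to the limit by $\L1$ convergence of $u_n$ and dominated convergence, the flux term because
\begin{displaymath}
  v_n(t)g(u_n)-v(t)g(u)
  = v_n(t)\bigl(g(u_n)-g(u)\bigr)+\bigl(v_n(t)-v(t)\bigr)g(u),
\end{displaymath}
with the first summand controlled by $\norma{v}_{\L\infty}\norma{g'}_{\L\infty(\mathcal{U};\reali)}\norma{u_n-u}_{\L1}$ and the second by boundedness of $g(u)$ on $\mathcal{U}$ times $\norma{v_n-v}_{\L1}$. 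The two boundary integrals, at $x=0$ and $x=L$, converge because $\norma{\partial_u f_n}_{\L\infty}=\norma{v_n}_{\L\infty}\norma{g'}_{\L\infty(\mathcal{U};\reali)}$ is uniformly bounded and in the limit it is majorised by $\norma{v}_{\L\infty}\norma{g'}_{\L\infty(\mathcal{U};\reali)}=\norma{\partial_u f}_{\L\infty}$. This is the crucial use of the $\L1$-stability of the definition of solution, exactly as cited from~\cite[Chapter~2, Remark~7.33]{MalekEtAlBook} and~\cite[Remark~3.6]{Elena1} in the half line discussion.

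Items~4 and~5 follow by applying the segment analogues of Proposition~\ref{prop:nonso} and Theorem~\ref{thm:3part}, namely Proposition~\ref{prop:nonsosg} and Theorem~\ref{thm:3sgPart}, to the Lipschitz approximations $u_n,\tilde u_n$ corresponding to $(v_n,g,u_o,\ub)$ and $(\tilde v_n,\tilde g,\tilde u_o,\boldsymbol{\tilde u_b})$, then passing to the limit in the resulting $\L1$ inequalities. The main obstacle I expect is precisely the passage to the limit in the Definition~\ref{def:solsg} inequality at the second boundary $x=L$: one has to make sure that the mollification of $v$ does not spoil the bound $\norma{v_n}_{\L\infty}\leq\norma{v}_{\L\infty}$, so that the boundary penalisation constant in front of the $x=L$ integral stays controlled in the limit; choosing a non-negative mollifier of unit mass and truncating, if necessary, against $v_{\min}$ takes care of this technicality. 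Everything else is a direct transcription of \S~\ref{subs:disc}.
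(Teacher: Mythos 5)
Your proposal is correct and takes essentially the same route as the paper, which omits the proof of Theorem~\ref{thm:mainsg} precisely because it is ``entirely analogous'' to the half--line argument of \S~\ref{subs:disc}: mollify $v$ preserving $v_{\min}$ and the $\L\infty$ bound, solve with the Lipschitz approximations $v_n$ via Proposition~\ref{prop:2sgPart}, use Theorem~\ref{thm:3sgPart} to obtain a Cauchy sequence in $\C0([0,T];\L1([0,L];\reali))$, pass to the limit in Definition~\ref{def:solsg} (the $x=L$ boundary term being handled exactly like the one at $x=0$), and deduce items~1--5 from the uniform bounds together with Proposition~\ref{prop:nonsosg} and Theorem~\ref{thm:3sgPart}. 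The only detail worth tightening is the limit of the flux term, where, as in the half--line proof, one should invoke the Kruzhkov--type lemma ensuring that $u \mapsto \sgn{}^\pm (u-k)\left(g (u) - g (k)\right)$ is Lipschitz uniformly in $k$, rather than splitting $v_n\,g(u_n)-v\,g(u)$ directly, since the integrand in Definition~\ref{def:solsg} carries the sign factor.
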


\section{Application to Vehicular Traffic}
\label{sec:example}

We consider below sample numerical integrations
of~\eqref{eq:1sgPart}. In the interior of the interval $[0,L]$, we
employ the standard Lax--Friedrichs
method~\cite[\S~4.6]{LeVequeBook2002}. Along the boundary, we
implement the Bardos, le Roux and
N\'ed\'elec~\cite{BardosLerouxNedelec} boundary condition in the
form~\cite[Proposition~2.3]{bordo}.

Consider a road segment of length $L = 250\,\mbox{m}$. Assume that at
the initial time the road is empty, that is to say, the initial datum
$u_o$ is equal to zero. At the entry of the road, a traffic light
remains green for $39\,\mbox{sec}$, while it displays red for
$27\, \mbox{sec}$ and, right at time $t=0$, the traffic light turns
green. Whenever the traffic light is green, the inflow is
$2000\,\mbox{cars}/\mbox{hour}$, see~\cite[\S~6.2]{RosiniBook} for
more details on assigning the inflow as boundary datum. At the end of
this road, a second traffic light regulates the outflow, being green
for $30 \, \mbox{sec}$, red for $45\,\mbox{sec}$, and first turning
red at time $t = 12\, \mbox{sec}$.

We describe the dynamics of traffic through the
Lighthill--Whitham~\cite{LighthillWhitham} and
Richards~\cite{Richards} model with time dependent maximal speed,
which amounts to~\eqref{eq:1sgPart} with
\begin{equation}
  \label{eq:7}
  g (u) = u  \left(1- \frac u R\right),
\end{equation}
$R$ being the maximal possible density, which is here considered to be
$200\, \mbox{cars}/\mbox{km}$. Concerning the time dependent (possibly
discontinuous) maximal speed $v (t)$, we let
\begin{equation}
  \label{eq:9}
  v (t) =
  \begin{cases}
    60 \,\mbox{km}/\mbox{hour} & \mbox{ if traffic light at $x=L$ is
      green,}
    \\
    V & \mbox{ if traffic light at $x=L$ is red,}
  \end{cases}
\end{equation}
where $V$ takes the values
\begin{align}
  \label{eq:8}
  40 \
  & \dfrac{\mbox{km}}{\mbox{hour}},
  &
    45 \
  & \dfrac{\mbox{km}}{\mbox{hour}},
  &
    50 \
  & \dfrac{\mbox{km}}{\mbox{hour}},
  &
    55 \
  & \dfrac{\mbox{km}}{\mbox{hour}},
  &
    60 \
  & \dfrac{\mbox{km}}{\mbox{hour}},
  &
    65 \
  & \dfrac{\mbox{km}}{\mbox{hour}},
  &
    70 \
  & \dfrac{\mbox{km}}{\mbox{hour}}.
\end{align}
These choices describe possible behaviours of drivers, reacting to the
traffic light in front of them either accelerating or slowing down. To
allow for reasonable comparisons among the different solutions, we
keep throughout the same inflow through the traffic light at $x=0$ as
well as the same outflow through the traffic light at $x=L$. As a
consequence, the global travel time, i.e.~the time necessary to empty
the road, is the same in all integrations. Note that the chosen inflow
of $2000 \, \mbox{cars/hour}$ does not allow choices of the maximal
speed lower than $40 \,\mbox{km/hour}$.

Remark, that the analytic setting presented in~\S~\ref{sec:discontsg}
applies, in particular, to the (possibly) discontinuous
choice~\eqref{eq:9}. Therefore, the resulting
model~\eqref{eq:1sgPart}--\eqref{eq:7}--\eqref{eq:9} is well posed.

To measure the queues formed due to the traffic light at $x=L$,
introduce the functional
\begin{equation}
  \label{eq:J}
  J = \int_0^T \int_{L-\delta}^L \Psi \left(u (t,x)\right) \d{x}  \d{t},
  \qquad \mbox{ where } \quad
  \delta = 100 \, \mbox{m}
\end{equation}
\begin{minipage}{0.38\linewidth}
  \includegraphics[width=0.95\textwidth, trim=10 5 40 10,
  clip=true]{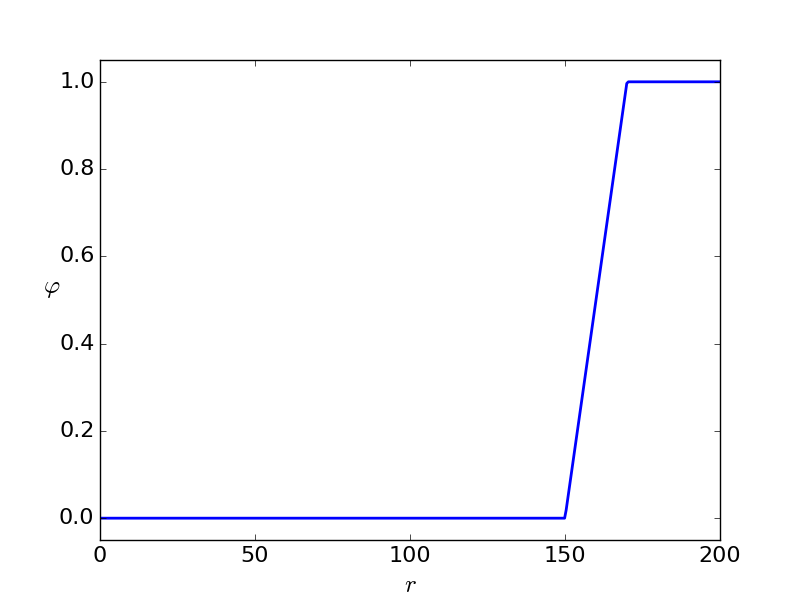} 
\end{minipage}%
\begin{minipage}{0.62\linewidth}
  \begin{equation}
    \label{eq:phi}
    \Psi (r) =
    \begin{cases}
      0 & \mbox{ if } r < 0.75 \, R,
      \\[3pt]
      \dfrac{10 \, r}{R} - 7.5 & \mbox{ if } 0.75 \,R \leq r \leq 0.85
      \, R,
      \\[3pt]
      1 & \mbox{ if } 0.85 \, R < r \leq R.
    \end{cases}
  \end{equation}
\end{minipage}\\
The function $\Psi$ weights $1$ wherever the traffic density is above
$85\%$ of the maximal density $R$, while it weights $0$ wherever the
vehicular density is lower than $75\%$ of $R$.

The values of $J$ resulting from the numerical integration
of~\eqref{eq:1sgPart}--\eqref{eq:7}--\eqref{eq:9}, in the different
cases~\eqref{eq:8}, are shown in Figure~\ref{fig:confronto}.
\begin{figure}[!h]
  \centering
  \includegraphics[width=0.6\textwidth, trim=20 0 45
  30,clip=true]{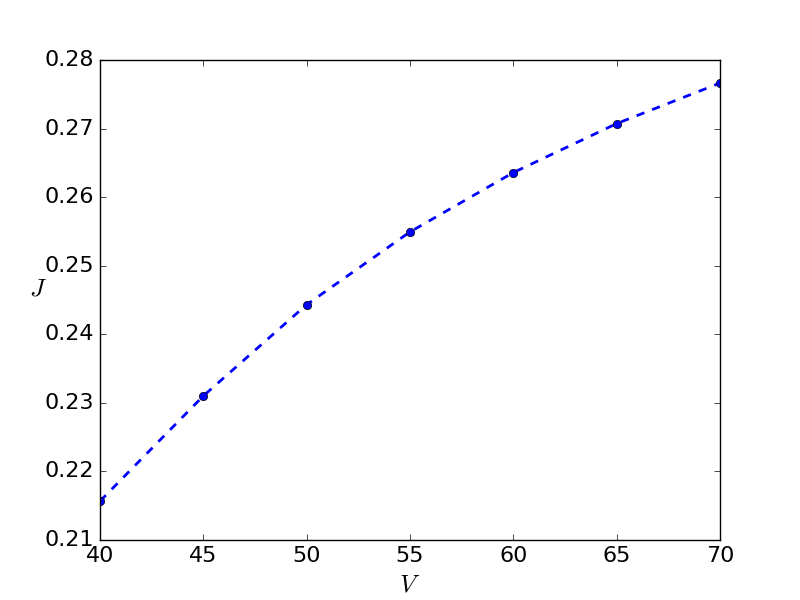}
  \caption{Values of $J$ as defined in~\eqref{eq:J}, resulting from
    the integration of~\eqref{eq:1sgPart}--\eqref{eq:7}--\eqref{eq:9},
    for $V$ varying as in~\eqref{eq:8}, with zero initial datum, while
    inflow and outflow are governed by traffic lights as specified in
    the text. }
  \label{fig:confronto}
\end{figure}
Note that the results in \S~\ref{sec:discontsg} apply to the present
setting and ensure the continuous dependence of $J$ on the parameter
$V$. Indeed, the map $V \to u$, where $u$
solves~\eqref{eq:1sgPart}--\eqref{eq:7}--\eqref{eq:9}, is continuous
with respect to the $\L1$ distance by Theorem~\ref{thm:mainsg}. The
continuity of the map $u \to J$ is immediate.

The best choice is clearly the one that corresponds to
$V = 40 \, \mbox{km/hour}$. The qualitative difference in the
evolution corresponding to the choices $V = 40 \, \mbox{km/hour}$ and
$V = 70 \, \mbox{km/hour}$ is displayed in Figure~\ref{fig:green}.
\begin{figure}[!h]
  \centering
  \includegraphics[width=0.5\textwidth, trim = 50 0 50 30, clip =
  true]{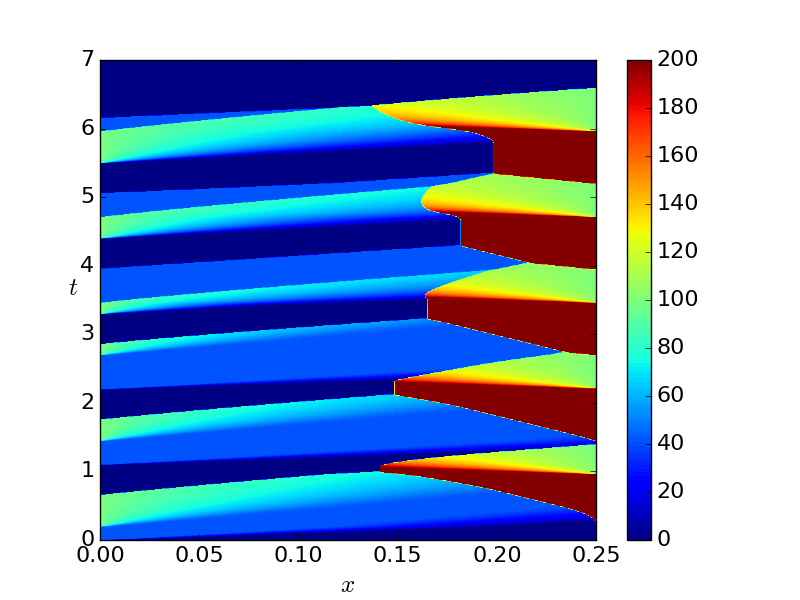}%
  \hfil
  \includegraphics[width=0.5\textwidth, trim = 50 0 50 30, clip =
  true]{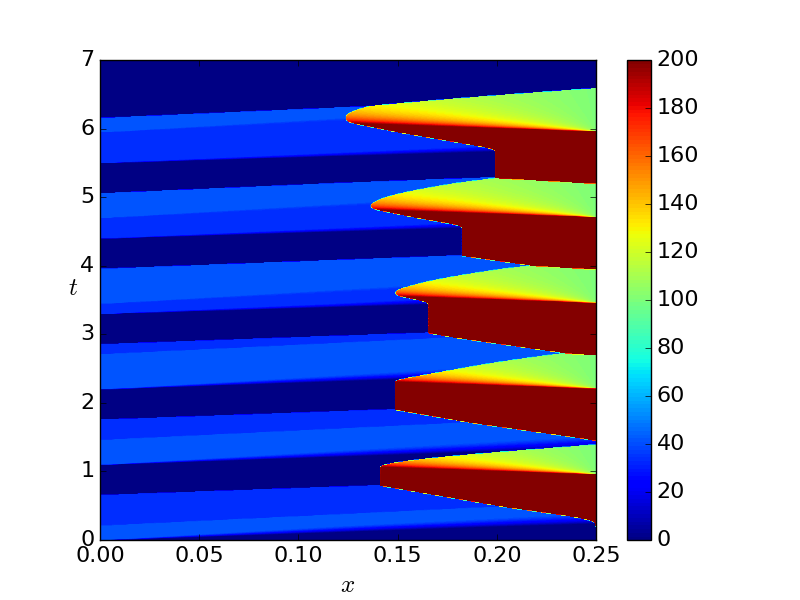}%
  \caption{Numerical integration
    of~\eqref{eq:1sgPart}--\eqref{eq:7}--\eqref{eq:9}, left with
    $V = 40 \, \mbox{km/hour}$ and, right, with
    $V = 70\,\mbox{km/hour}$. The initial datum is zero, inflow and
    outflow are both governed by traffic lights as specified in the
    text.\label{fig:green}}
\end{figure}
These graphs confirm that the speed reductions allows to reduce the
queue lengths.

\section{Technical Proofs}
\label{sec:TP}

\subsection{Preliminary Results}

We recall below the Lipschitz continuous dependence of the solution
to~\eqref{eq:1} on initial and boundary data.

\begin{proposition}[{\cite[Proposition~2.2]{IBVP1D}}]
  \label{prop:Elena}
  Let $f \in \C1([0,T] \times \reali; \reali)$ be such that the map
  $\{u \mapsto \partial_t f (t,u)\} \in \Wloc1\infty (\reali; \reali)$
  for all $t \in [0,T]$,
  $u_o, \tilde u_o \in (\L1 \cap \BV) (\reali_+; \reali)$ and
  $u_b, \tilde u_b \in (\L1 \cap \BV) ([0,T]; \reali)$. Assume the
  problems
  \begin{displaymath}
    \!\!\!
    \left\{
      \begin{array}{@{\,}l@{\,}c@{\,}lr@{\,}c@{\,}l}
        \multicolumn{3}{l}{%
        \partial_t u + \partial_x f (t,u) = 0\ }
        & (t,x)
        & \in
        & [0,T] \times \reali_+
        \\
        u (0,x)
        & =
        & u_o (x)
        & x
        & \in
        & \reali_+
        \\
        u (t, 0)
        & =
        & u_b (t)
        & t
        & \in
        & [0,T]
      \end{array}
    \right.
    \mbox{ and }
    \left\{
      \begin{array}{@{\,}l@{\,}c@{\,}lr@{\,}c@{\,}l}
        \multicolumn{3}{l}{%
        \partial_t \tilde u + \partial_x f (t,\tilde u) = 0\ }
        & (t,x)
        & \in
        & [0,T] \times \reali_+
        \\
        \tilde u (0,x)
        & =
        & \tilde u_o (x)
        & x
        & \in
        & \reali_+
        \\
        \tilde u (t, 0)
        & =
        & \tilde u_b (t)
        & t
        & \in
        & [0,T]
      \end{array}
    \right.
    \!\!
  \end{displaymath}
  admit solutions
  $u,\tilde u \in \L\infty ([0,T] \times \reali_+; \reali)$ in the
  sense of Definition~\ref{def:solhl}, such that $u$ and $\tilde u$
  both admit a trace for $x \to 0+$ for a.e.~$t \in [0,T]$. Then, for
  all $t \in [0,T]$,
  \begin{displaymath}
    \norma{u (t) - \tilde u (t)}_{\L1 (\reali_+; \reali)}
    \leq
    \norma{u_o - \tilde u_o}_{\L1 (\reali_+; \reali)}
    +
    \norma{\partial_u f}_{\L\infty ([0,t] \times \mathcal{U};\reali)} \;
    \norma{u_b - \tilde u_b}_{\L1 ([0,t]; \reali)}
  \end{displaymath}
  where
  $\mathcal{U} = \mathcal{U} ({u_b}_{|[0,t]}, \, {\tilde
    u}_{b|[0,t]})$ is as in~\eqref{eq:19}.
\end{proposition}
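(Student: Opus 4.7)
The plan is to adapt Kruzhkov's doubling-of-variables technique to the boundary setting, in the spirit of Otto, Vovelle, and Martin, using the extra boundary term built into Definition~\ref{def:solhl}. The regularity assumption that $u \mapsto \partial_t f(t,u)$ is locally Lipschitz (uniformly in $t$) together with $f \in \C1$ gives a local Lipschitz modulus for $f$ in $u$, which is what is needed to push through the standard estimates after doubling.

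First, I would fix $t \in [0,T]$ and a nonnegative test function $\Phi \in \Cc1(\reali \times \reali; \reali_+)$, and apply Definition~\ref{def:solhl} to $u$ (in variables $(s,y)$) taking the constant $k = \tilde u(\tau,z)$, and symmetrically to $\tilde u$ (in variables $(\tau,z)$) with $k = u(s,y)$. I would test both inequalities against $\phi(s,y,\tau,z) = \Phi\!\left(\tfrac{s+\tau}{2}, \tfrac{y+z}{2}\right)\rho_\epsilon(s-\tau)\rho_\epsilon(y-z)$, with $\rho_\epsilon$ a standard mollifier. Adding the $+$ and $-$ versions and sending $\epsilon \to 0$ gives, after routine computations using the $\L\infty$ bounds on $u,\tilde u$ and the local Lipschitz continuity of $f$ in $u$, a single integral inequality in the variables $(t,x)$ of Kato type:
\begin{align*}
  & \int_0^T\!\!\!\int_{\reali_+} \left\{ \modulo{u - \tilde u}\,\partial_t \Phi + \sgn(u - \tilde u)\bigl(f(t,u) - f(t,\tilde u)\bigr)\partial_x\Phi \right\}\,\d{x}\d{t} \\
  & + \int_{\reali_+} \modulo{u_o - \tilde u_o}\,\Phi(0,x)\,\d{x} - \int_{\reali_+} \modulo{u(T,x) - \tilde u(T,x)}\,\Phi(T,x)\,\d{x} \\
  & + \norma{\partial_u f}_{\L\infty([0,T] \times \mathcal{U};\reali)} \int_0^T \modulo{u_b(t) - \tilde u_b(t)}\,\Phi(t,0)\,\d{t} \geq 0.
\end{align*}
Here the existence of the traces $\gamma u(\cdot,0)$, $\gamma\tilde u(\cdot,0)$ is what allows to absorb the cross boundary terms arising from the doubling: one rewrites $(u_b - \tilde u)^\pm$ and $(\tilde u_b - u)^\pm$ at the boundary using the traces, and the BLN-type dissipation $\sgn(\gamma u - \gamma \tilde u)\bigl(f(t,\gamma u) - f(t,\gamma \tilde u)\bigr)$ provides a nonnegative contribution that can be discarded, collapsing everything into the single term above.

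Next, I would specialize $\Phi$ to approximate $\chi_{[0,t]}(\tau)\,\eta_n(z)$, with $\eta_n$ a smooth cutoff increasing to $\mathbf{1}_{\reali_+}$ and $\chi_{[0,t]}$ replaced by a smooth non-increasing approximation in time. The spatial terms involving $\partial_x\Phi$ vanish in the limit because $|u|, |\tilde u| \in \L\infty$ and $|f(t,u)-f(t,\tilde u)| \leq \norma{\partial_u f}_{\L\infty([0,t]\times\mathcal{U};\reali)}\,|u-\tilde u|$ is $\L1$ in $x$, while $\partial_x \eta_n$ has shrinking support at infinity; taking the limit in $n$ produces the time-derivative term only. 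The time cutoff produces, in the limit, $\norma{u(t) - \tilde u(t)}_{\L1(\reali_+;\reali)}$ with a minus sign balanced against $\norma{u_o - \tilde u_o}_{\L1(\reali_+;\reali)}$ and the boundary contribution $\norma{\partial_u f}_{\L\infty([0,t]\times\mathcal{U};\reali)}\,\norma{u_b - \tilde u_b}_{\L1([0,t];\reali)}$, giving the claim.

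The main obstacle is the handling of the boundary terms at $x = 0$ in Step~2: one must show that the two extra boundary penalties (one from each doubled entropy inequality) combine with the trace-contributions from $\partial_x \Phi$ to yield exactly $\norma{\partial_u f}_\infty \int_0^t|u_b - \tilde u_b|$, without a spurious factor~2. This is precisely where the hypothesis that $u$ and $\tilde u$ admit traces at $x\to 0+$ is used: the trace version of the entropy inequality yields the BLN dissipation $\sgn(\gamma u - \gamma \tilde u)[f(\cdot,\gamma u) - f(\cdot,\gamma \tilde u)] \geq 0$, which cancels the excess boundary mass generated by doubling. Every other step is standard Kruzhkov bookkeeping.
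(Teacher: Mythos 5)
The paper itself never proves Proposition~\ref{prop:Elena}: it is quoted verbatim from~\cite[Proposition~2.2]{IBVP1D} and used as a black box, so the only meaningful comparison is with the standard argument behind that reference. Your outline is indeed that standard route — Kruzhkov doubling adapted to Definition~\ref{def:solhl}, a Kato inequality up to the boundary, then test functions approximating $\chi_{[0,t]}$ in time and exhausting $\reali_+$ in space — and the interior and limiting steps you describe are unobjectionable.

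The gap is exactly at the boundary step you single out, and your proposed resolution of it is wrong as stated. You claim that the trace term $\sgn\left(\gamma u-\gamma \tilde u\right)\left(f (t,\gamma u)-f (t,\gamma \tilde u)\right)$ (writing $\gamma u (t)$, $\gamma \tilde u (t)$ for the traces at $x=0+$) is sign-definite and can simply be discarded. No such sign can be asserted independently of the boundary data: for Burgers' flux one easily produces admissible traces making this quantity strictly positive in one configuration and strictly negative in another, and indeed a data-independent sign would be incompatible with the final estimate, whose boundary contribution must be proportional to $\modulo{u_b-\tilde u_b}$. What is actually needed — and what carries essentially all the content of the proposition — is the quantitative boundary comparison
\begin{displaymath}
  \sgn\left(\gamma u (t)-\gamma \tilde u (t)\right)
  \left(f\left(t,\gamma u (t)\right)-f\left(t,\gamma \tilde u (t)\right)\right)
  \leq
  \norma{\partial_u f}_{\L\infty ([0,t]\times\mathcal{U};\reali)}\,
  \modulo{u_b (t)-\tilde u_b (t)}
  \qquad \mbox{for a.e.~}t,
\end{displaymath}
with the sign dictated by the outward normal at $x=0$; it reduces to a sign statement only when $u_b=\tilde u_b$. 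Obtaining it requires first localizing Definition~\ref{def:solhl} at the boundary to get, for a.e.~$t$ and every $k$, the trace inequality $\sgn{}^\pm\left(\gamma u (t)-k\right)\left(f\left(t,\gamma u (t)\right)-f (t,k)\right)\leq \norma{\partial_u f}_{\L\infty ([0,t]\times\mathcal{U};\reali)}\left(u_b (t)-k\right)^\pm$ (and its analogue for $\tilde u$), and then a BLN-type case analysis on the relative positions of $\gamma u$, $\gamma\tilde u$, $u_b$, $\tilde u_b$. Your sketch replaces this lemma by an incorrect sign claim and the vague assertion that it ``cancels the excess boundary mass,'' so the proof is incomplete precisely at the step that produces the term $\norma{\partial_u f}_{\L\infty ([0,t]\times\mathcal{U};\reali)}\norma{u_b-\tilde u_b}_{\L1 ([0,t];\reali)}$ with the correct constant; the rest is, as you say, routine bookkeeping.
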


\noindent Remark that Proposition~\ref{prop:Elena} also ensures the
uniqueness of the solution to~\eqref{eq:1} in the sense of
Definition~\ref{def:solhl}, as soon as a solution exists.

Focus now on the particular case of the autonomous IBVP on the half
line:
\begin{equation}
  \label{eq:1aut}
  \left\{
    \begin{array}{@{\,}l@{\,}c@{\,}lr@{\,}c@{\,}l}
      \multicolumn{3}{l}{%
      \partial_t u + \partial_x g (u) = 0\qquad\ }
      & (t,x)
      & \in
      & [0,T] \times \reali_+
      \\
      u (0,x)
      & =
      & u_o (x)
      & x
      & \in
      & \reali_+
      \\
      u (t, 0)
      & =
      & u_b (t)
      & t
      & \in
      & [0,T] \,.
    \end{array}
  \right.
\end{equation}
As a definition of solution to~\eqref{eq:1aut}, we consider
Definition~\ref{def:solhl} discarding the explicit dependence of the
flux on time $t$.

\begin{proposition}[{\cite[Proposition~2.3]{IBVP1D}}]
  \label{prop:2}
  Let $g \in \Wloc{1}{\infty}(\reali; \reali)$,
  $u_o \in (\L1 \cap \BV) (\reali_+; \reali)$ and
  $u_b \in (\L1 \cap \BV) ([0,T]; \reali)$. Then
  problem~\eqref{eq:1aut} admits a solution $u$ in the sense of
  Definition~\ref{def:solhl}, with the properties:
  \begin{enumerate}
  \item If $u_o$ and $u_b$ are piecewise constant, then for $t$ small,
    the map $t \to u (t)$ coincides with the gluing of Lax solutions
    to Riemann problems at the points of jumps of $u_o$ and at $x=0$.
  \item Range of $u$: with the notation in~\eqref{eq:19},
    $u (t,x) \in \mathcal{U} (u_o, {u_b}_{|[0,t]})$ for
    a.e.~$(t,x) \in [0,T] \times \reali_+$. Hence, for all
    $t\in [0,T]$,
    $ \norma{u (t)}_{\L\infty (\reali_+; \reali)} \leq \max \left\{
      \norma{u_o}_{\L\infty (\reali_+; \reali)}, \,
      \norma{u_b}_{\L\infty ([0,t];\reali)} \right\}$.
  \item $u$ is $\L1$--Lipschitz continuous in time: for all
    $t_1,t_2 \in [0,T]$,
    \begin{align*}
      \nonumber
      \norma{u (t_1) - u (t_2)}_{\L1 (\reali_+; \reali)}
      \leq \
      & \tv (t_1\vee t_2, u_o,u_b) \;
        \norma{g'}_{\L\infty (\mathcal{U};\reali)} \;
        \modulo{t_2 - t_1},
    \end{align*}
    with the notation~\eqref{eq:4} and
    $\mathcal{U} = \mathcal{U} (u_o, {u_b}_{|[0,t_1 \vee t_2]})$,
    according to~\eqref{eq:19}.
  \item Total variation estimate: for $t {\in} [0,T]$, with the
    notation~\eqref{eq:4},
    $\tv\left(u (t) \right) {\leq} \tv (t, u_o, u_b)$.
  \end{enumerate}
\end{proposition}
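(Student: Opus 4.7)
My plan is to prove Proposition~\ref{prop:2} by wave-front tracking, following the classical Dafermos--Bressan scheme adapted to the initial-boundary value problem on a half line. The construction starts with piecewise constant data: given $u_o \in \PC (\reali_+; \reali)$ and $u_b \in \PC ([0,T]; \reali)$, I solve the standard Riemann problem at each jump of $u_o$ inside $\reali_+$, and the boundary Riemann problem at $x=0$ at each jump of $u_b$, using the Bardos--Le Roux--N\'ed\'elec admissibility condition to select the portion of the wave fan that enters the domain. For $t$ sufficiently small the fronts have not interacted with each other nor have the jumps of $u_b$ been reached, so $t \mapsto u(t)$ is literally the concatenation of these Lax solutions, yielding item 1. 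Past the first interaction time the construction is continued by solving a Riemann problem at each interaction point; since $g$ is only $\Wloc1\infty$, not necessarily convex, one may either piecewise-linearly approximate $g$ so that Riemann fans consist of finitely many shocks and contact discontinuities, or work directly with composite rarefaction/shock fans.

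The a priori estimates are obtained on these approximate solutions. The range bound (item 2) follows by $\L1$--contraction between $u$ and the constant solutions taking values $\min \mathcal{U}$ and $\max \mathcal{U}$, via Proposition~\ref{prop:Elena}. The total variation estimate (item 4) is a bookkeeping of the waves: those coming from jumps of $u_o$ contribute at most $\tv (u_o)$, those entering from jumps of $u_b$ contribute at most $\tv (u_b; [0,t])$, the initial corner at $(0,0)$ contributes $\modulo{u_b (0+) - u_o (0+)}$, and interior or boundary wave interactions do not increase the total variation in the scalar setting. The $\L1$--Lipschitz continuity in time (item 3) then follows from the equation itself: for $t_1 < t_2$, since $u(\tau)$ is piecewise constant in $x$,
\begin{displaymath}
  \norma{u (t_2) - u (t_1)}_{\L1 (\reali_+; \reali)}
  \leq \int_{t_1}^{t_2} \norma{\partial_x g (u (\tau))}_{\L1 (\reali_+; \reali)} \d{\tau}
  \leq \norma{g'}_{\L\infty (\mathcal{U}; \reali)} \, \tv (t_1 \vee t_2, u_o, u_b) \, \modulo{t_2 - t_1}.
\end{displaymath}

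For general $(u_o, u_b) \in (\L1 \cap \BV)$, I would approximate by piecewise constant data converging in $\L1$ and with total variation converging to that of the limit, apply the construction above to each approximation, and pass to the limit. The uniform $\tv$ bound and uniform $\L1$--Lipschitz bound in time supply, via Helly's selection theorem, a subsequence converging in $\Lloc1$ to a candidate solution $u$ inheriting all the stated estimates. That $u$ satisfies the entropy inequalities of Definition~\ref{def:solhl} follows from the $\L1$--stability of these inequalities under convergence of uniformly bounded BV functions, as recalled in the Introduction. Uniqueness, and hence convergence of the whole sequence, is provided by Proposition~\ref{prop:Elena}. The main technical obstacle lies in handling wave interactions with the boundary $x=0$ when $g$ is non-convex: a single jump of $u_b$ can produce a Riemann fan only part of which enters the domain while the rest is absorbed into the boundary trace, and the careful case analysis of this boundary Riemann problem is where the bookkeeping for the total variation estimate is concentrated.
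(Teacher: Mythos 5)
Note first that the paper does not prove Proposition~\ref{prop:2} at all: it is recalled verbatim from~\cite[Proposition~2.3]{IBVP1D} as a preliminary result, and everything in Section~\ref{sec:TP} is built on top of it. So the only meaningful comparison is with the proof in that reference, and your wave-front tracking outline is indeed the construction behind it (point~1 of the statement, about gluing Lax solutions of Riemann problems for small times, is exactly what that scheme produces). The overall architecture you propose --- piecewise constant data, interior and boundary Riemann solvers, uniform a priori bounds, Helly compactness, stability of the entropy--boundary inequalities under $\L1$ convergence, uniqueness from Proposition~\ref{prop:Elena} --- is the standard and appropriate one.

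Two steps of your sketch, however, do not work as written. First, the range bound of point~2 cannot be obtained from Proposition~\ref{prop:Elena} by comparison with the constant states $\min\mathcal{U}$ and $\max\mathcal{U}$: constants are not $(\L1\cap\BV)$ data on $\reali_+$, and an $\L1$--contraction estimate gives no pointwise information anyway; the bound should instead be read off the construction, since the (boundary) Riemann solver takes values in the interval spanned by its data and interactions cannot enlarge that interval (or one invokes the order-preserving property of the semigroup, which is a different statement from contraction). Second, and more substantially, the claim that ``interior or boundary wave interactions do not increase the total variation'' is false at the boundary: with the BLN/Dubois--LeFloch boundary solver, a wave absorbed at $x=0$ can trigger the emission into the domain of a wave of \emph{larger} strength (partial reflection). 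For Burgers' flux with $u_b=1$ and trace $-1.5$, a wave bringing the trace to $-0.5$ is absorbed, but the new boundary Riemann problem emits a shock of strength $1.5$ with positive speed, so $\tv\left(u(t)\right)$ alone increases. The estimate of point~4 (and through it the constant in point~3) is obtained from the non-increase in $s$ of a Glimm-type functional of the form $\tv\left(u(s)\right)+\modulo{u_b(s+)-u(s,0+)}+\tv\left(u_b;[s,t]\right)$, whose value at $s=0$ is exactly $\tv(t,u_o,u_b)$ in~\eqref{eq:4}; this boundary bookkeeping is precisely what you defer to ``careful case analysis'', and without it points~3 and~4 are not established.
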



\begin{theorem}[{\cite[Theorem~2.4]{IBVP1D}}]
  \label{thm:3}
  Let $g,\tilde g \in \C1 (\reali; \reali)$,
  $u_o \in (\L1 \cap \BV) (\reali_+; \reali)$ and
  $u_b \in (\L1 \cap \BV) ([0,T]; \reali)$. Call $u$ and $\tilde u$
  the solutions to the problems
  \begin{displaymath}
    \!\!\!
    \left\{
      \begin{array}{l@{\,}c@{\,}lr@{\,}c@{\,}l}
        \multicolumn{3}{l}{%
        \partial_t u + \partial_x g (u) = 0\ }
        & (t,x)
        & \in
        & [0,T] \times \reali_+
        \\
        u (0,x)
        & =
        & u_o (x)
        & x
        & \in
        & \reali_+
        \\
        u (t, 0)
        & =
        & u_b (t)
        & t
        & \in
        & [0,T]
      \end{array}
    \right.
    \mbox{ and }
    \left\{
      \begin{array}{l@{\,}c@{\,}lr@{\,}c@{\,}l}
        \multicolumn{3}{l}{%
        \partial_t \tilde u + \partial_x \tilde g (\tilde u) = 0\ }
        & (t,x)
        & \in
        & [0,T] \times \reali_+
        \\
        \tilde u (0,x)
        & =
        &  u_o (x)
        & x
        & \in
        & \reali_+
        \\
        \tilde u (t, 0)
        & =
        &  u_b (t)
        & t
        & \in
        & [0,T]
      \end{array}
    \right.
    \!\!
  \end{displaymath}
  constructed in Proposition~\ref{prop:2}. Then, with
  $\mathcal{U} = \mathcal{U} (u_o, {u_b}_{|[0,t]})$ as
  in~\eqref{eq:19}, for all $t \in [0,T]$,
  \begin{eqnarray*}
    \norma{u (t) - \tilde u (t)}_{\L1 (\reali_+; \reali)}
    & \leq
    & \max
      \left\{
      1, G
      \right\} \,
      \tv (t, u_o, u_b) \,
      \norma{g' - \tilde g'}_{\L\infty (\mathcal{U};\reali)} \,
      t ,
  \end{eqnarray*}
  with $\tv (t, u_o, u_b)$ as in~\eqref{eq:4} and
  $G = \min\left\{ \norma{g'}_{\L\infty (\mathcal{U}; \reali)},
    \norma{\tilde g'}_{\L\infty (\mathcal{U}; \reali)} \right\}$.
\end{theorem}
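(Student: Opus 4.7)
The plan is to reduce to the case of piecewise constant initial and boundary data and then perform a wave-by-wave comparison between $u$ and $\tilde u$. I would choose sequences $u_o^n$ and $u_b^n$ of piecewise constant data converging in $\L1$ to $u_o$ and $u_b$ respectively, with uniformly bounded total variations. Applying Proposition~\ref{prop:Elena} to the pairs $(u, u^n)$ and $(\tilde u, \tilde u^n)$ shows that $u^n \to u$ and $\tilde u^n \to \tilde u$ in $\L1$. Thus it suffices to prove the estimate for piecewise constant data, since then I may pass to the limit in $n$.

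For piecewise constant data, Proposition~\ref{prop:2}, point~1, guarantees that, for $t$ small, $u$ and $\tilde u$ are exactly the gluing of the Lax solutions to the Riemann problems located at each jump point of $u_o$ and at $x=0$. The key single-Riemann estimate is: for an interior Riemann problem with states $u_L$ and $u_R$ in $\mathcal{U}$, the two Lax profiles $W^g (x/t)$ and $W^{\tilde g} (x/t)$ satisfy
\begin{equation*}
    \int_\reali \modulo{W^g (x/t) - W^{\tilde g} (x/t)} \d{x} \leq t \, \modulo{u_R - u_L} \, \norma{g' - \tilde g'}_{\L\infty (\mathcal{U}; \reali)} \,.
\end{equation*}
This follows by writing both profiles via the convex (respectively concave) envelope of $g$ and $\tilde g$ between $u_L$ and $u_R$, whose derivatives differ pointwise by at most $\norma{g' - \tilde g'}_{\L\infty (\mathcal{U}; \reali)}$. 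Summing over all interior Riemann problems, whose jump heights add up to at most $\tv (u_o) + \tv (u_b;[0,t]) + \modulo{u_b (0+) - u_o (0+)}$, produces a contribution of the form $\tv (t, u_o, u_b) \cdot t \cdot \norma{g' - \tilde g'}_{\L\infty (\mathcal{U};\reali)}$. Wave interactions do not spoil this, since for a scalar equation the total variation is non-increasing and the wave-by-wave bound is preserved across interactions.

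The main obstacle is the boundary Riemann problem at $x=0$, whose resolution via the Bardos--LeRoux--N\'ed\'elec condition depends on the flux: for identical $u_o$ and $u_b$, the boundary traces of $u$ and $\tilde u$ may already differ because the admissible boundary fan is dictated by the geometry of $g$ (respectively $\tilde g$). A separate estimate, based on the fact that any such boundary-generated discrepancy propagates into the interior with speed at most $\min\{\norma{g'}_{\L\infty (\mathcal{U}; \reali)}, \norma{\tilde g'}_{\L\infty (\mathcal{U}; \reali)}\} = G$, gives an additional contribution bounded by $G \cdot t \cdot \tv (t, u_o, u_b) \cdot \norma{g' - \tilde g'}_{\L\infty (\mathcal{U};\reali)}$. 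Combining the interior and boundary contributions yields the factor $\max\{1, G\}$ of the statement. Passing to the limit $n \to \infty$ and using the approximation step completes the proof.
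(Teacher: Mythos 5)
You should first note that the paper does not prove Theorem~\ref{thm:3} at all: it is recalled verbatim from~\cite{IBVP1D} as a preliminary result, so there is no in-paper proof to compare with, and your sketch has to stand on its own. As it stands it has two genuine gaps. The first is the treatment of interactions. Proposition~\ref{prop:2}, point~1, gives the ``gluing of Lax fans'' structure only for small times, before any interaction; your assertion that ``the wave-by-wave bound is preserved across interactions'' because the total variation is non-increasing is precisely the point that requires proof, and it does not follow from TV monotonicity. Once waves have interacted, $u$ and $\tilde u$, being driven by \emph{different} fluxes, no longer share a common decomposition into Riemann problems with equal left and right states, so the single-Riemann estimate cannot simply be re-applied and summed. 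To get a bound linear in $t$ one must propagate the error already produced by the $\L1$-Lipschitz dependence on data of one of the two evolutions (Proposition~\ref{prop:Elena}) while new error is produced at rate $\tv\cdot\norma{g'-\tilde g'}_{\L\infty(\mathcal{U};\reali)}$, i.e.\ one needs a Kuznetsov-type doubling-of-variables argument, a semigroup ``error formula'' with a Gronwall-type integration in time, or front tracking with piecewise linear fluxes and explicit bookkeeping. Without one of these, the heart of the estimate is asserted rather than derived.

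The second gap is the boundary contribution, which you yourself identify as the main obstacle but then dispose of with one sentence. The issue is not the \emph{speed} at which a boundary-generated discrepancy propagates, but its \emph{size}: you must show that the two boundary fans, whose admissible traces are selected by the different geometries of $g$ and $\tilde g$ through the Bardos--LeRoux--N\'ed\'elec condition, differ in $\L1$ at time $t$ by at most a constant times $t\,\norma{g'-\tilde g'}_{\L\infty(\mathcal{U};\reali)}$ times the relevant wave strength; the heuristic ``propagates with speed at most $G$'' cannot by itself produce the factor $\norma{g'-\tilde g'}$, and no argument is given for why the flux-dependent choice of admissible boundary states only costs this much. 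Note also that adding an interior contribution with factor $1$ and a boundary contribution with factor $G$, each multiplied by the full $\tv(t,u_o,u_b)$, gives $1+G$, not $\max\{1,G\}$: to recover the stated constant you must charge the interior part to $\tv(u_o)$ and the boundary part to $\tv(u_b;[0,t])$ plus the corner jump. Finally, a smaller point: the pointwise claim that the derivatives of the convex (or concave) envelopes of $g$ and $\tilde g$ differ by at most $\norma{g'-\tilde g'}_{\L\infty}$ needs a proof; what the single-Riemann estimate actually requires is the integrated identity expressing $\int_\reali \modulo{W^g(x/t)-W^{\tilde g}(x/t)}\d{x}$ as $t$ times the $\L1$ distance of the envelope derivatives on $[u_L,u_R]$, which is the form one should establish and use.
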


\subsection{Proofs Related to the Case $v$ Lipschitz Continuous}
\label{sec:vLip}

Let $v \in \C{0,1} ([0,T]; [v_{\min}, +\infty[)$ and let
$\hat T = \int_0^T v (\tau) \d\tau$. Define the bijective map
$\Gamma \colon [0,\hat T] \to [0,T]$ through its inverse
\begin{equation}
  \label{eq:gammaexpl}
  \Gamma^{-1}(t) = \int_0^t v (s)\d{s}
\end{equation}
so that
\begin{equation}
  \label{eq:Gamma}
  \Gamma'(\tau) \; v\left( \Gamma(\tau)\right) = 1
  \, , \qquad
  (\Gamma^{-1})' (t) = v(t)
  \,,\quad
  \Gamma (0) = 0
  \quad \mbox{ and } \quad
  \Gamma (\hat T) = T\,.
\end{equation}
We now establish the equivalence between the non autonomous
problem~\eqref{eq:1part} and an autonomous problem of
type~\eqref{eq:1aut}. Throughout, by \emph{solution} we mean
\emph{solution in the sense of Definition~\ref{def:solhl}}.

\begin{lemma}
  \label{lem:Gamma}
  Let $v \in \C{0,1} ([0,T]; [v_{\min}, +\infty[)$ and
  $g \in \C1 (\reali; \reali)$.  Define $\Gamma$ as
  in~\eqref{eq:gammaexpl}. Then, if $u$ solves~\eqref{eq:1part} the
  map
  \begin{equation}
    \label{eq:5}
    w (\tau,x) = u \left( \Gamma (\tau), x \right)
  \end{equation}
  solves
  \begin{equation}
    \label{eq:1w}
    \left\{
      \begin{array}{r@{\;}c@{\;}l@{\qquad}r@{\,}c@{\,}l}
        \multicolumn{3}{l}{\partial_\tau w  + \partial_x g(w) = 0\qquad}
        & (\tau,x)
        & \in
        & [0,\hat T] \times \reali_+
        \\
        w (0,x)
        & =
        & u_o (x)
        & x
        & \in
        & \reali_+
        \\
        w (\tau, 0)
        & =
        & w_b (\tau)
        & \tau
        & \in
        & [0,\hat T],
      \end{array}
    \right.
  \end{equation}
  where $w_b (\tau) = u_b \left(\Gamma (\tau) \right)$.  Conversely,
  if $w$ solves~\eqref{eq:1w}, then the map
  \begin{equation}
    \label{eq:uw}
    u(t, x) = w (\Gamma^{-1} (t), x)
  \end{equation}
  solves~\eqref{eq:1part}, where
  $u_b (t) = w_b \left(\Gamma^{-1} (t)\right)$.
\end{lemma}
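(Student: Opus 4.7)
The plan is to prove the equivalence directly, by substituting the time change $t = \Gamma(\tau)$ into the integral inequality of Definition~\ref{def:solhl}. The hypothesis $v \in \C{0,1}([0,T];[v_{\min},+\infty[)$ with $v_{\min}>0$ ensures that $\Gamma$ and $\Gamma^{-1}$ are $C^1$ diffeomorphisms; extending $v$ to $\reali$ by a positive Lipschitz continuation, composition with $\Gamma$ or $\Gamma^{-1}$ maps $\Cc1$ test functions to $\Cc1$ test functions. The key algebraic identity $\Gamma'(\tau)\,v(\Gamma(\tau))=1$ from~\eqref{eq:Gamma} will produce all the required cancellations.

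For the forward direction, I fix $k\in\reali$ and $\psi \in \Cc1(\reali\times\reali;\reali_+)$, set $\phi(t,x) = \psi(\Gamma^{-1}(t),x)$, and apply Definition~\ref{def:solhl} to $u$ with this $\phi$. By the chain rule $\partial_t\phi(t,x) = v(t)\,\partial_\tau\psi(\Gamma^{-1}(t),x)$, while the Jacobian of $t=\Gamma(\tau)$ is $\d{t}=\d{\tau}/v(\Gamma(\tau))$; the factor $v$ arising either from $\partial_t\phi$ or from the flux $v(t)\,g(u)$ cancels this Jacobian exactly, so the two interior integrals and, via $\Gamma(0)=0$ and $\Gamma(\hat T)=T$, the initial and terminal data integrals turn into the corresponding integrals of Definition~\ref{def:solhl} for $w$ with flux $g$.

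The main obstacle is the boundary term. The substitution sends $\norma{v}_{\L\infty}\norma{g'}_{\L\infty(\mathcal{U})}\int_0^T(u_b(t)-k)^\pm\phi(t,0)\,\d{t}$ into $\int_0^{\hat T}\bigl(\norma{v}_{\L\infty}\norma{g'}_{\L\infty(\mathcal{U})}/v(\Gamma(\tau))\bigr)(w_b(\tau)-k)^\pm\psi(\tau,0)\,\d{\tau}$, whose coefficient is pointwise only $\geq\norma{g'}_{\L\infty(\mathcal{U})}$, not equal to it. Thus the substitution alone yields a boundary inequality that is strictly weaker than the one demanded by Definition~\ref{def:solhl} for~\eqref{eq:1w}. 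To close this gap I plan to invoke the standard equivalence of Definition~\ref{def:solhl} with the Kru\v{z}kov entropy inequality in the interior plus the Bardos--Le Roux--N\'ed\'elec boundary condition (cf.~\cite{MalekEtAlBook,Vovelle}): the Kru\v{z}kov inequality transfers via the very cancellations used above, and the BLN condition is invariant under multiplication by the positive factor $v(\Gamma(\tau))$, so both survive the time change and together deliver Definition~\ref{def:solhl} for~\eqref{eq:1w} with the correct constant $\norma{g'}_{\L\infty(\mathcal{U})}$.

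The converse implication is easier and needs no such equivalence. Given $w$ solving~\eqref{eq:1w}, for each test function $\phi$ for~\eqref{eq:1part} I set $\psi(\tau,x)=\phi(\Gamma(\tau),x)$ and run the computation in reverse, with $\d{\tau}=v(t)\,\d{t}$ and $\partial_\tau\psi = (1/v(t))\partial_t\phi$ at $\tau=\Gamma^{-1}(t)$. The interior and data terms transform identically; the autonomous boundary coefficient $\norma{g'}_{\L\infty(\mathcal{U})}$ becomes $\norma{g'}_{\L\infty(\mathcal{U})}\,v(t) \leq \norma{v}_{\L\infty}\norma{g'}_{\L\infty(\mathcal{U})}$, so replacing it by the larger required constant only enlarges the non-negative boundary integral, and the non-autonomous weak inequality for $u$ follows at once.
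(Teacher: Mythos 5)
Your proposal is correct in substance and, at bottom, rests on the same two ingredients as the paper: the exact cancellation $\Gamma'(\tau)\,v\left(\Gamma(\tau)\right)=1$ from~\eqref{eq:Gamma} under the substitution $t=\Gamma(\tau)$, and a trace--based reformulation of the boundary condition. The difference is organisational. The paper never touches the $\norma{\partial_u f}_{\L\infty}$--weighted boundary term at all: it computes a single functional $\mathcal{A}$ in which the boundary contribution is written through the trace, as $\sgn^\pm\left(u_b-k\right)\left(g\left(w(\tau,0+)\right)-g(k)\right)\phi(\tau,0+)$, so that every term is homogeneous in the flux and the change of variables turns the autonomous quantity exactly into the non--autonomous one (the factor $v(t)$ recombining with $g$ into $f(t,\cdot)=v(t)g(\cdot)$); both implications then follow at once from the equivalence of ``$\mathcal{A}\geq 0$ for all $k,\phi$'' with Definition~\ref{def:solhl}, which the paper leaves implicit and draws from the cited literature. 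You instead substitute directly into Definition~\ref{def:solhl}, which buys you the converse direction with no auxiliary equivalence at all (a nice observation: the constant only needs to dominate $\norma{g'}\,v(t)$), correctly identify that the same substitution fails in the forward direction because the coefficient $\norma{v}_{\L\infty}\norma{g'}_{\L\infty(\mathcal{U})}/v\left(\Gamma(\tau)\right)$ only dominates $\norma{g'}_{\L\infty(\mathcal{U})}$, and close that direction via the equivalence with the Kru\v{z}kov interior inequality plus the Bardos--Le Roux--N\'ed\'elec condition, which is indeed invariant under multiplication of the flux by a positive scalar and under the bi-Lipschitz time change. The only caveat is that this equivalence presupposes the existence of a strong trace of the solution at $x=0$ (and the stability of that property under the time change); this is not a defect relative to the paper, whose own proof also uses $w(\tau,0+)$, and it is harmless in context since the lemma is applied to $\BV$ solutions, but you should state the trace hypothesis explicitly when invoking the BLN formulation.
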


\begin{proof}
  For any $\phi \in \Cc1(\reali \times \reali; \reali_+)$ and for any
  $k \in \reali$, compute the quantity
  \begin{align*}
    \mathcal{A} = \
    & \int_0^{\hat T}\int_{\reali_+} \left(w(\tau,x)-k\right)^\pm
      \partial_\tau \phi (\tau,x) \d{x} \d{\tau}
    \\
    &  + \int_0^{\hat T}\int_{\reali_+} \sgn{}^\pm \left(w(\tau,x)-k\right) \,
      \left( g \left( w(\tau,x)\right) - g(k) \right)  \,
      \partial_x \phi (\tau,x) \d{x} \d{\tau}
    \\
    & + \int_0^{\hat T} \sgn{}^\pm \left(u_b\left(\Gamma(\tau)\right)-k\right) \,
      \left( g \left( w(\tau,0+)\right) - g(k) \right)  \,
      \phi (\tau,0+)  \d{\tau}
    \\
    & + \int_{\reali_+}\left(u_o(x) -k\right)^\pm \phi(0,x) \d{x}
      - \int_{\reali_+} \left(w(\hat T,x) -k\right)^\pm \phi(\hat T,x) \d{x}
    \\
    = \
    & \int_0^{\hat T}\int_{\reali_+} \left(u\left(\Gamma(\tau),x\right)-k\right)^\pm
      \partial_\tau \phi (\tau,x) \d{x} \d{\tau}
    \\
    & + \int_0^{\hat T}\int_{\reali_+}
      \sgn{}^\pm\left(u\left(\Gamma(\tau),x\right)-k\right)
      \left( g \left( u(\Gamma(\tau),x) \right) - g(k) \right)  \,
      \partial_x \phi (\tau,x) \d{x} \d{\tau}
    \\
    & + \int_0^{\hat T} \sgn{}^\pm \left(u_b\left(\Gamma(\tau)\right)-k\right) \,
      \left( g \left( u\left(\Gamma(\tau),0+\right)\right) - g(k) \right)  \,
      \phi (\tau,0+)  \d{\tau}
    \\
    & + \int_{\reali_+} \left(u_o(x) -k\right)^\pm \phi(0,x) \d{x}
      - \int_{\reali_+} \left(u\left(\Gamma(\hat T),x\right) -k\right)^\pm
      \phi(\hat T,x) \d{x}.
  \end{align*}
  Use now the change of variable $\Gamma(\tau) = t$ and define
  $\psi(t,x) = \phi\left(\Gamma^{-1}(t),x\right)$. Clearly,
  $\psi \in \Cc1 (\reali\times\reali;\reali_+)$.  Using the
  properties~\eqref{eq:Gamma} of the function $\Gamma$, continue the
  computation:
  \begin{align*}
    \mathcal{A} =
    & \int_0^{T}\int_{\reali_+} \left(u\left(t,x\right)-k\right)^\pm \,
      \partial_t \psi \left(t,x\right) \d{x} \d{t}
    \\
    & + \int_0^{T}\int_{\reali_+} \sgn{}^\pm\left(u\left(t,x\right)-k\right) \,
      \left( g \left( u\left(t,x\right) \right) - g(k) \right) v(t) \,
      \partial_x \psi\left(t,x\right)\d{x} \d{t}
    \\
    & + \int_0^{T} \sgn{}^\pm \left(u_b\left(t\right)-k\right) \,
      \left( g \left( u\left(t,0+\right)\right) - g(k) \right) v(t) \,
      \psi\left(t,0+\right)  \d{t}
    \\
    & + \int_{\reali_+} \left(u_o(x) -k\right)^\pm \psi(0,x) \d{x}
      - \int_{\reali_+} \left( u\left(T,x\right) -k\right)^\pm
      \psi\left(T,x\right) \d{x}.
  \end{align*}
  Therefore, $u$ is a solution to~\eqref{eq:1part} on $[0,T]$ in the
  sense of Definition~\ref{def:solhl} if and only if $w$ is a solution
  to~\eqref{eq:1w} on $[0,\hat T]$ in the sense of
  Definition~\ref{def:solhl}.
\end{proof}

\begin{proofof}{Proposition~\ref{prop:nonso}}
  Call $w$ and $\tilde w$ the solutions to~\eqref{eq:1w} corresponding
  to $u$ and $\tilde u$, respectively, through the relation defined by
  Lemma~\ref{lem:Gamma}. For all $t \in [0,T]$, use~\eqref{eq:uw} and
  apply Proposition~\ref{prop:Elena} to $w$ and $\tilde w$:
  \begin{align*}
    \norma{u (t) - \tilde u (t)}_{\L1 (\reali_+;\reali)}
    = \
    & \norma{w (\Gamma^{-1} (t)) - \tilde w (\Gamma^{-1} (t))}_{\L1 (\reali_+;\reali)}
    \\
    \leq \
    & \norma{w_o -\tilde w_o}_{\L1 (\reali_+;\reali)} +
      \norma{g'}_{\L\infty (\mathcal{U};\reali)}
      \norma{w_b -\tilde w_b}_{\L1 ([0,\Gamma^{-1} (t)];\reali)}
    \\
    \leq \
    & \norma{u_o -\tilde u_o}_{\L1 (\reali_+;\reali)} +
      \norma{g'}_{\L\infty (\mathcal{U};\reali)}
      \norma{v}_{\L\infty ([0,t];\reali)}
      \norma{u_b -\tilde u_b}_{\L1 ([0,t];\reali)},
  \end{align*}
  where
  $\mathcal{U}=\mathcal{U}\left({w_b}_{|[0,\Gamma^{-1} (t)]}, {\tilde
      w}_{b|[0,\Gamma^{-1} (t)]}\right) =
  \mathcal{U}\left({u_b}_{|[0,t]}, {\tilde u}_{b|[0,t]}\right)$,
  and this concludes the proof.
\end{proofof}

\begin{proofof}{Proposition~\ref{prop:2part}} The existence of
  solutions follows from Lemma~\ref{lem:Gamma}, that allows to apply
  Proposition~\ref{prop:2}.

  In each of the steps below we exploit the correspondence $\Gamma$
  and the result of Lemma~\ref{lem:Gamma}.

  \paragraph{1. $\L\infty$--bound.} Thanks to~\eqref{eq:uw} and to
  Point~2.~in Proposition~\ref{prop:2}, for $t \in [0,T]$ we have
  \begin{displaymath}
    \norma{u (t)}_{\L\infty (\reali_+;\reali)} =
    \norma{w(\Gamma^{-1} (t))}_{\L\infty (\reali_+;\reali)}
    \leq
    \max\left\{\norma{u_o}_{\L\infty (\reali_+;\reali)},
      \norma{w_b}_{\L\infty ([0,\Gamma^{-1}(t)];\reali)} \right\}.
  \end{displaymath}
  Observe that
  \begin{displaymath}
    \norma{w_b}_{\L\infty ([0,\Gamma^{-1}(t)];\reali)}
    {=} \! \sup_{\tau \in [0,\Gamma^{-1}(t)]} \! \modulo{w_b (\tau)}
    = \! \sup_{\tau \in [0,\Gamma^{-1}(t)]} \! \modulo{u_b \left(\Gamma(\tau)\right)}
    = \sup_{s \in [0,t]}\modulo{u_b (t)}
    = \norma{u_b}_{\L\infty ([0,t];\reali)},
  \end{displaymath}
  hence $\displaystyle
  \norma{u (t)}_{\L\infty (\reali_+;\reali)} \leq
  \max\left\{\norma{u_o}_{\L\infty (\reali_+;\reali)},
    \norma{u_b}_{\L\infty ([0,t];\reali)} \right\}$.  Moreover,
  \begin{displaymath}
    u (t,x) = \ w(\Gamma^{-1} (t),x) \ \in \ \mathcal{U} (u_o,
    {w_b}_{|[0,\Gamma^{-1} (t)]}) =  \ \mathcal{U}(u_o, {u_b}_{|[0,t]}),
  \end{displaymath}
  for a.e~$(t,x) \in [0,T] \times \reali_+$, concluding the proof of
  Point~1.

  \paragraph{2. $\L1$--Lipschitz continuity in time.} Thanks
  to~\eqref{eq:uw} and to Point~3.~in Proposition~\ref{prop:2},
  \begin{align*}
    \norma{u (t_1) - u (t_2)}_{\L1 (\reali_+;\reali)}
    = \
    & \norma{w (\Gamma^{-1} (t_1)) - w(\Gamma^{-1} (t_2))}_{\L1 (\reali_+;\reali)}
    \\
    \leq \
    & \tv (\Gamma^{-1} (t_1) \vee \Gamma^{-1} (t_2), w_o, w_b) \,
      \norma{g'}_{\L\infty (\mathcal{U};\reali)} \,
      \modulo{\Gamma^{-1} (t_1) - \Gamma^{-1} (t_2)},
  \end{align*}
  with the notation in~\eqref{eq:4} and
  $\mathcal{U} = \mathcal{U} (w_o, {w_b}_{|[0; \Gamma^{-1} (t_1) \vee
    \Gamma^{-1} (t_2)]}) = \mathcal{U} (u_o, {u_b}_{|[0; t_1 \vee
    t_2]})$, according to~\eqref{eq:19}. Observe that:
  \begin{align*}
    \tv (w_b; [0, \Gamma^{-1} (t_1) \vee \Gamma^{-1} (t_2)]) = \
    & \tv (u_b; [0, t_1\vee t_2]) \,,
    \\
    \modulo{w_b(0+) - w_o(0+)} = \
    & \modulo{u_b(0+) - u_o(0+)} \,,
    \\
    \modulo{\Gamma^{-1} (t_1) - \Gamma^{-1} (t_2)} \leq \
    & \norma{(\Gamma^{-1})'}_{\L\infty ([0;t_1 \vee t_2]; \reali)}
      \modulo{t_1-t_2} = \norma{v}_{\L\infty ([0;t_1 \vee t_2]; \reali)} \modulo{t_1-t_2} \,,
  \end{align*}
  where we use~\eqref{eq:Gamma}. Therefore, referring also
  to~\eqref{eq:4},
  \begin{displaymath}
    \norma{u (t_1) - u (t_2)}_{\L1 (\reali_+;\reali)}
    \leq
    \tv (t_1 \vee t_2, u_o, u_b) \,
    \norma{v}_{\L\infty ([0;t_1 \vee t_2]; \reali)}\,
    \norma{g'}_{\L\infty (\mathcal{U};\reali)}  \modulo{t_1-t_2},
  \end{displaymath}
  proving Point~2.

  \paragraph{3. Total variation estimate.} For all $t \in [0,T]$,
  thanks to~\eqref{eq:uw} and to Point~4.~in Proposition~\ref{prop:2},
  using the notation~\eqref{eq:4}, we have
  \begin{displaymath}
    \tv \left(u (t)\right) =
    \tv\left(w (\Gamma^{-1} (t))\right)
    \leq
    \tv \left(\Gamma^{-1} (t), w_o, w_b\right)
    = \tv (t, u_o, u_b) ,
  \end{displaymath}
  proving Point~3.
\end{proofof}

\begin{lemma}
  \label{lem:AFP}
  Let $u \in \BV (\reali_+;\reali) $ and
  $\phi \colon \reali_+ \to \reali_+$ be measurable. Then, for all
  $y\in \reali_+$,
  \begin{displaymath}
    \int_0^y \modulo{u\left(x+\phi (x)\right) - u (x)}\d{x}
    \leq
    \norma{\phi}_{\L\infty ([0,y];\reali)}
    \,
    \tv(u;[0,y]).
  \end{displaymath}
\end{lemma}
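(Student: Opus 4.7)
The plan is the classical BV translation argument via approximation and Fubini. First I would prove the estimate for smooth $u \in \C1 (\reali_+; \reali)$; the general case $u \in \BV (\reali_+; \reali)$ then follows by density, approximating $u$ in $\Lloc1$ by mollifications $u_n \in \C1$ satisfying $\int \modulo{u_n'} \to \tv (u)$, and passing to the limit on both sides along a subsequence converging a.e.

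For smooth $u$, the fundamental theorem of calculus gives
\begin{displaymath}
  u (x + \phi (x)) - u (x) = \int_0^{\phi (x)} u'(x+s) \d{s},
\end{displaymath}
so that, extending the inner integral up to $\norma{\phi}_{\L\infty ([0,y];\reali)}$ by a characteristic function and then exchanging the order of integration via Fubini, one obtains
\begin{displaymath}
  \int_0^y \modulo{u (x+\phi (x)) - u (x)} \d{x}
  \leq \int_0^{\norma{\phi}_{\L\infty ([0,y];\reali)}} \!\! \int_0^y \modulo{u'(x+s)} \d{x} \d{s}
  \leq \norma{\phi}_{\L\infty ([0,y];\reali)} \, \tv (u; [0,y]).
\end{displaymath}

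An alternative, regularisation--free route works directly with a right--continuous representative of $u$: using $\modulo{u (b) - u (a)} \leq \modulo{Du} ((a,b])$ and Fubini for the product of Lebesgue measure on $[0,y]$ with the total variation measure $\modulo{Du}$, the integral is bounded by
\begin{displaymath}
  \int \modulo{\{x \in [0,y] : x < z \leq x + \phi (x)\}} \, \mathrm{d}\modulo{Du} (z),
\end{displaymath}
and for each $z$ the inner set is contained in $[z - \norma{\phi}_{\L\infty ([0,y];\reali)}, z)$, hence has Lebesgue measure at most $\norma{\phi}_{\L\infty ([0,y];\reali)}$; pulling this factor outside yields the claim.

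The main technical point, rather than a genuine obstacle, is the pointwise meaning of $u (x + \phi (x))$ when $u$ is only defined almost everywhere and $\phi$ is merely measurable: one fixes a canonical pointwise representative of $u$ (e.g.~the right--continuous one) and checks measurability of the composition $x \mapsto u (x+\phi (x))$, after which either of the two Fubini arguments above delivers the bound without further difficulty.
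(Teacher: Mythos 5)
Both of your routes are sound up to their final line, but that final line is exactly where the claim slips: what you actually control is the variation of $u$ on the interval swept by the shifted points, namely $[0,\,y+\norma{\phi}_{\L\infty([0,y];\reali)}]$, not on $[0,y]$. In the mollification route, $\int_0^y \modulo{u'(x+s)}\d{x}=\int_s^{y+s}\modulo{u'(\xi)}\d{\xi}$, which is bounded by $\tv\bigl(u;[0,y+\norma{\phi}_{\L\infty([0,y];\reali)}]\bigr)$ but in general not by $\tv(u;[0,y])$; in the measure--theoretic route the variable $z$ ranges over $\bigl(0,\,y+\norma{\phi}_{\L\infty([0,y];\reali)}\bigr]$, so pulling the factor $\norma{\phi}_{\L\infty([0,y];\reali)}$ out of the integral leaves $\modulo{Du}$ of that larger interval, not of $(0,y]$. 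This is not a removable defect of your argument: with $\tv(u;[0,y])$ on the right the inequality is simply false. Take $y>0$, $u=\caratt{[y+1,+\infty[}$ and $\phi\equiv y+1$: the left hand side equals $y$, while $\tv(u;[0,y])=0$. What your two arguments do prove (for a fixed good representative, and your closing remark about representatives is the right thing to worry about --- the Fubini route with $\modulo{u(b)-u(a)}\leq\modulo{Du}\left((a,b]\right)$ is the cleanest way to settle it) is the estimate with $\tv\bigl(u;[0,y+\norma{\phi}_{\L\infty([0,y];\reali)}]\bigr)$, equivalently with $\tv(u)$, on the right hand side.

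For comparison, the paper argues differently: it approximates $\phi$ (not $u$) by simple functions and applies the $\BV$ translation estimate of \cite[Remark~3.25]{AmbrosioFuscoPallara} on each subinterval of the partition. That route meets the same obstruction, because the translation estimate bounds $\int_{x^k_{n-1}}^{x^k_n}\modulo{u(x+\phi^k_n)-u(x)}\d{x}$ by $\phi^k_n$ times the variation of $u$ on the \emph{enlarged} interval $[x^k_{n-1},\,x^k_n+\phi^k_n]$, and the translated intervals overlap; your Fubini argument against $\modulo{Du}$ actually handles that bookkeeping more transparently than summing over the partition. So: different mechanism (approximation of $u$, or Fubini with respect to $\modulo{Du}$, versus approximation of $\phi$), same endpoint, namely the enlarged--interval version of the estimate. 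That weaker version is what is really available, and it is also what is effectively needed where the lemma is invoked, in~\eqref{eq:11} inside the proof of Theorem~\ref{thm:3part}: there the points $\Gamma\bigl(\tilde\Gamma^{-1}(\sigma)\bigr)$ may leave $[0,t]$, so the variation of $u_b$ must be taken on $[0,\,t\vee\sup_{\sigma\leq t}\Gamma(\tilde\Gamma^{-1}(\sigma))]$, an enlargement which the symmetrisation leading to~\eqref{eq:etwas} can then absorb.
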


\begin{proof}
  Approximate the function $\phi$ with a sequence of simple functions
  $\phi_k$, so that $\phi_k \to \phi$ pointwise a.e.~and
  $\phi_k\leq \phi$. In particular
  \begin{displaymath}
    \phi_k(x) = \sum_{n=1}^{N_k} \phi_n^k \, \caratt{[x_{n-1}^k,x_{n}^k[}(x)
  \end{displaymath}
  for suitable $N_k \in \naturali$ and
  $0=x_0^k<x_1^k<\ldots < x_{N_k-1}^k<x_{N_k}^k=y$. Then, for any
  $k \in \naturali$,
  \begin{align*}
    \int_0^y \modulo{u\left(x+\phi_k (x)\right)-u(x)}\d{x}
    \leq
    & \sum_{n=1}^{N_k} \int_{x_{n-1}^k}^{x_n^k}
      \modulo{u\left(x+\phi_n^k\right) - u (x)}\d{x}
    \\
    \leq \
    & \sum_{n=1}^{N_k} \phi_n^k \, \tv\left(u;[x_{n-1}^k,x_n^k[\right)
      \qquad \mbox{[\small{by~\cite[Remark~3.25]{AmbrosioFuscoPallara}}]}
    \\
    \leq \
    & \norma{\phi_k}_{\L\infty ([0,y];\reali)} \, \tv(u; [0,y])
    \\
    \leq \
    & \norma{\phi}_{\L\infty ([0,y];\reali)} \, \tv(u; [0,y]).
  \end{align*}
  Since $\phi_k \to \phi$ pointwise a.e., we obtain the thesis.
\end{proof}

\begin{proofof}{Theorem~\ref{thm:3part}} For simplicity, we deal
  separately with the cases $v = \tilde v$ and $g = \tilde g$.

  \paragraph{Stability w.r.t.~$g$:} Assume first $v = \tilde v$.
  Then the autonomous problems corresponding to~\eqref{eq:quella}
  through $\Gamma$ as defined in~\eqref{eq:gammaexpl} are
  \begin{displaymath}
    \left\{
      \begin{array}{l}
        \partial_\tau w + \partial_x  g(w) = 0
        \\
        w (0,x) = u_o (x)
        \\
        w (\tau, 0) = u_b \left(\Gamma (\tau)\right)
      \end{array}
    \right. \quad \mbox{ and } \quad
    \left\{
      \begin{array}{l}
        \partial_\tau \tilde w + \partial_x  \tilde g(\tilde w) = 0
        \\
        \tilde w (0,x) = u_o (x)
        \\
        \tilde w (\tau, 0) = u_b \left(\Gamma (\tau)\right).
      \end{array}
    \right.
  \end{displaymath}
  For all $t \in [0,T]$, apply Lemma~\ref{lem:Gamma} and
  Theorem~\ref{thm:3} to obtain
  \begin{align*}
    & \norma{u (t)-\tilde u (t)}_{\L1 (\reali_+;\reali)}
    \\
    = \
    & \norma{
      w (\Gamma^{-1} (t)) -\tilde w (\Gamma^{-1} (t))
      }_{\L1 (\reali_+;\reali)}
    \\
    \leq \
    & \tv (t, u_o, u_b)
      \max \left\{
      1,
      \min\left\{
      \norma{g'}_{\L\infty (\mathcal{U};\reali)},
      \norma{\tilde g'}_{\L\infty (\mathcal{U};\reali)}
      \right\}
      \right\}
      \norma{g' - \tilde g'}_{\L\infty (\mathcal{U};\reali)}
      \, \Gamma^{-1} (t)
    \\
    \leq \
    & \tv (t, u_o, u_b)
      \max\left\{
      1,
      \min
      \left\{
      \norma{g'}_{\L\infty (\mathcal{U};\reali)},
      \norma{\tilde g'}_{\L\infty (\mathcal{U};\reali)}
      \right\}
      \right\}
      \norma{g' - \tilde g'}_{\L\infty (\mathcal{U};\reali)}
      \, \norma{v}_{\L\infty ([0,t];\reali)} \, t,
  \end{align*}
  with the notation~\eqref{eq:4} and with
  $\mathcal{U} = \mathcal{U} (u_o,{u_b}_{|[0,t]})$ as
  in~\eqref{eq:19}.

  \paragraph{Stability w.r.t.~$v$:} Assume now $g=\tilde g$. Let
  $\Gamma$ be as in~\eqref{eq:gammaexpl} and call $\tilde\Gamma$ the
  analogous function associated to $\tilde v$. Through $\Gamma$ and
  $\tilde\Gamma$ we apply Lemma~\ref{lem:Gamma} to the autonomous
  problems
  \begin{displaymath}
    \left\{
      \begin{array}{l}
        \partial_\tau w + \partial_x  g(w) = 0
        \\
        w (0,x) = u_o (x)
        \\
        w (\tau, 0) = w_b (\tau)
      \end{array}
    \right. \quad \mbox{ and } \quad
    \left\{
      \begin{array}{l}
        \partial_s \tilde w + \partial_x  g(\tilde w) = 0
        \\
        \tilde w (0,x) = u_o (x)
        \\
        \tilde w (s, 0) = \tilde w_b (s)
      \end{array}
    \right.
    \quad \mbox{where} \quad
    \begin{array}{@{}rcl@{}}
      w_b (\tau)
      & =
      & u_b \left(\Gamma (\tau)\right) ,
      \\
      \tilde w_b (s)
      & =
      & u_b \big(\tilde \Gamma (s)\big).
    \end{array}
  \end{displaymath}
  For all $t \in [0,T]$ we have
  \begin{align}
    \label{eq:boh}
    & \norma{u (t) - \tilde u (t)}_{\L1 (\reali_+;\reali)}
    \\
    \nonumber
    = \
    & \norma{w (\Gamma^{-1} (t)) -
      \tilde w (\tilde \Gamma^{-1}(t))}_{\L1 (\reali_+;\reali)}
    \\
    \label{eq:2}
    \leq \
    & \norma{w (\Gamma^{-1} (t)) - w (\tilde \Gamma^{-1}
      (t))}_{\L1 (\reali_+;\reali)} + \norma{w (\tilde \Gamma^{-1} (t)) -
      \tilde w (\tilde \Gamma^{-1} (t))}_{\L1 (\reali_+;\reali)}.
  \end{align}
  Consider the two terms in~\eqref{eq:2} separately. The first can be
  estimated using the Lipschitz continuity in time of the solutions to
  autonomous problems, i.e.~point~3.~in Proposition~\ref{prop:2}:
  \begin{displaymath}
    \norma{w (\Gamma^{-1} (t)) - w (\tilde \Gamma^{-1}(t))}_{\L1 (\reali_+;\reali)}
    \leq \tv (\Gamma^{-1} (t) \vee \tilde \Gamma^{-1} (t), u_o,w_b) \,
    \norma{g'}_{\L\infty (\mathcal{U};\reali)}
    \modulo{\Gamma^{-1} (t) - \tilde \Gamma^{-1} (t)},
  \end{displaymath}
  with the notation~\eqref{eq:4}, which leads to
  \begin{align*}
    \tv (\Gamma^{-1} (t) \vee \tilde \Gamma^{-1} (t), u_o,w_b) = \
    & \tv (u_o) + \tv (w_b; [0, \Gamma^{-1} (t) \vee \tilde \Gamma^{-1} (t)])
      + \modulo{w_b(0+) - u_o(0+)}
    \\
    = \
    & \tv (u_o) + \tv (u_b; [0, t \vee \Gamma(\tilde \Gamma^{-1} (t))])
      + \modulo{u_b(0+) - u_o(0+)}
    \\
    = \
    & \tv (t \vee \Gamma(\tilde \Gamma^{-1} (t)),u_o,u_b),
  \end{align*}
  and
  \begin{equation}
    \label{eq:3}
    \mathcal{U}=\mathcal{U}\left(u_o,{w_b}_{| [0, \Gamma^{-1} (t) \vee
        \tilde \Gamma^{-1} (t)]}\right) = \mathcal{U}\left(u_o,{u_b}_{|[0, t
        \vee \Gamma(\tilde \Gamma^{-1} (t))]}\right).
  \end{equation}
  Due to the definition~\eqref{eq:gammaexpl} of $\Gamma^{-1}$ and
  $\tilde \Gamma^{-1}$, we get
  \begin{equation}
    \label{eq:gammadif}
    \modulo{\Gamma^{-1} (t) - \tilde \Gamma^{-1} (t)}
    \leq \int_0^t \modulo{v (s) - \tilde v (s)}\d{s}
    =
    \norma{v - \tilde v}_{\L1 ([0,t];\reali)}.
  \end{equation}
  Therefore
  \begin{equation}
    \label{eq:2a}
    \norma{w (\Gamma^{-1} (t)) - w (\tilde \Gamma^{-1}(t))}_{\L1 (\reali_+;\reali)}
    \!\!\!
    \leq  \tv (t \vee \Gamma(\tilde \Gamma^{-1} (t)),u_o,u_b) \,
    \norma{g'}_{\L\infty (\mathcal{U};\reali)}
    \norma{v - \tilde v}_{\L1 ([0,t];\reali)}.
  \end{equation}
  The second term in~\eqref{eq:2} is the difference between solutions
  to autonomous IBVPs with different boundary data, computed at time
  $\tilde \Gamma^{-1} (t)$. By Proposition~\ref{prop:Elena}, with
  $\mathcal{U}$ as in~\eqref{eq:3}:
  \begin{align}
    \nonumber
    & \norma{w (\tilde \Gamma^{-1} (t)) -
      \tilde w (\tilde \Gamma^{-1} (t))}_{\L1 (\reali_+;\reali)}
    \\
    \nonumber
    \leq \
    & \norma{g'}_{\L\infty (\mathcal{U}; \reali)}
      \norma{w_b - \tilde w_b}_{\L1 ([0,\tilde \Gamma^{-1} (t)])}
    \\
    \nonumber
    = \
    & \norma{g'}_{\L\infty (\mathcal{U}; \reali)}
      \int_0^{\tilde \Gamma^{-1} (t)} \modulo{w_b (s) - \tilde w_b (s)}\d{s}
    \\
    \nonumber
    = \
    &\norma{g'}_{\L\infty (\mathcal{U}; \reali)}
      \int_0^{\tilde \Gamma^{-1} (t)}
      \modulo{u_b (\Gamma(s)) - u_b (\tilde \Gamma(s))}\d{s}
    \\
    \nonumber
    = \
    &  \norma{g'}_{\L\infty (\mathcal{U}; \reali)}
      \int_0^t
      \modulo{
      u_b(\Gamma (\tilde \Gamma^{-1} (\sigma))) - u_b (\sigma)} \,
      \tilde v (\sigma) \d\sigma
    \\
    \label{eq:10}
    \leq \
    & \norma{g'}_{\L\infty (\mathcal{U}; \reali)}
      \norma{\tilde v}_{\L\infty ([0,t];\reali)}
      \int_0^t
      \modulo{
      u_b(\Gamma (\tilde \Gamma^{-1} (\sigma))) - u_b (\sigma)} \,
      \d\sigma,
  \end{align}
  where we set $\tilde \Gamma (s)=\sigma$. Apply now
  Lemma~\ref{lem:AFP} to the integral term in~\eqref{eq:10}:
  \begin{align}
    \nonumber
    \int_0^t
    \modulo{
    u_b(\Gamma (\tilde \Gamma^{-1} (\sigma))) - u_b (\sigma)} \,
    \d\sigma
    \leq \
    & \tv\left(u_b;[0,t]\right)\,
      \sup_{\sigma \in [0,t]} \modulo{\Gamma (\tilde \Gamma^{-1} (\sigma)) - \sigma}
    \\
    \nonumber
    \leq \
    & \tv\left(u_b;[0,t]\right)\,
      \sup_{\sigma \in [0,t]}
      \modulo{\Gamma (\tilde \Gamma^{-1} (\sigma)) - \Gamma(\Gamma^{-1}(\sigma))}
    \\
    \nonumber
    \leq \
    & \tv\left(u_b;[0,t]\right)\,
      \mathrm{Lip}\, \Gamma
      \sup_{\sigma \in [0,t]}
      \modulo{\tilde \Gamma^{-1} (\sigma) - \Gamma^{-1} (\sigma)}
    \\ \nonumber
    \leq \
    & \tv\left(u_b;[0,t]\right)\,
      \mathrm{Lip}\, \Gamma
      \sup_{\sigma \in [0,t]}
      \left(\norma{v-\tilde v}_{\L1 ([0,\sigma];\reali)}\right)
    \\
    \label{eq:11}
    \leq \
    & \tv\left(u_b;[0,t]\right)\,
      \frac1{v_{\min}} \,
      \norma{v-\tilde v}_{\L1 ([0,t];\reali)}  \,,
  \end{align}
  where we exploited also~\eqref{eq:gammadif}. Using~\eqref{eq:11}
  in~\eqref{eq:10} yields
  \begin{align}
    \nonumber
    & \norma{w (\tilde \Gamma^{-1} (t)) -
      \tilde w (\tilde \Gamma^{-1} (t))}_{\L1 (\reali_+;\reali)}
    \\
    \label{eq:2b}
    \leq \
    & \norma{g'}_{\L\infty (\mathcal{U}; \reali)}
      \norma{\tilde v}_{\L\infty ([0,t];\reali)}
      \tv\left(u_b;[0,t]\right)\,
      \frac1{v_{\min}} \,
      \norma{v-\tilde v}_{\L1 ([0,t];\reali)} .
  \end{align}
  Insert now~\eqref{eq:2a} and~\eqref{eq:2b} in~\eqref{eq:2} to obtain
  \begin{align*}
    & \norma{u (t)-\tilde u (t)}_{\L1 (\reali_+;\reali)}
    \\
    \leq \
    & \left(1+ \norma{\tilde v}_{\L\infty ([0,t];\reali)}
      \frac1{v_{\min}} \right)\,
      \tv (t \vee \Gamma (\Gamma^{-1} (t)),u_o,u_b)
      \norma{g'}_{\L\infty (\mathcal{U}; \reali)}
      \norma{v-\tilde v}_{\L1 ([0,t];\reali)} .
  \end{align*}
  Observe that~\eqref{eq:boh} can be estimated also in the following
  way:
  \begin{displaymath}
    \norma{u (t) - \tilde u (t)}_{\L1 (\reali_+;\reali)}
    \leq
    \norma{w (\Gamma^{-1} (t)) - \tilde w (\Gamma^{-1} (t))}_{\L1 (\reali_+;\reali)}
    +
    \norma{\tilde w (\Gamma^{-1} (t)) -
      \tilde w (\tilde \Gamma^{-1} (t))}_{\L1 (\reali_+;\reali)},
  \end{displaymath}
  which yields a symmetric result.

  \smallskip

  We claim that the following inequality holds: for $t\in [0,T]$
  \begin{equation}
    \label{eq:etwas}
    \min\left\{
      \max\left\{
        t, \, \Gamma\left(\tilde \Gamma^{-1}(t)\right)
      \right\},
      \max\left\{
        t, \, \tilde \Gamma\left(\Gamma^{-1}(t)\right)
      \right\}
    \right\}
    \leq
    t.
  \end{equation}
  Indeed, if $t\geq \Gamma\left(\tilde \Gamma^{-1}(t)\right)$, then
  clearly $\Gamma^{-1}(t)\geq \tilde \Gamma^{-1}(t)$ and this implies
  $\tilde \Gamma\left(\Gamma^{-1}(t)\right)\geq t$.  Therefore, the
  left hand side in~\eqref{eq:etwas} now reads
  $\min\left\{ t, \tilde \Gamma\left(\Gamma^{-1}(t)\right) \right\} =
  t$.
  The case $t\leq \Gamma\left(\tilde \Gamma^{-1}(t)\right)$ leads to
  the same result, completing the proof of the claim.

  \smallskip

  Hence, exploiting~\eqref{eq:etwas} we obtain the following estimate
  for \eqref{eq:boh}:
  \begin{align*}
    \norma{u (t)-\tilde u (t)}_{\L1 (\reali_+;\reali)}
    \leq \
    & \left(1+
      \frac1{v_{\min}}
      \min\left\{\norma{v}_{\L\infty ([0,t];\reali)},
      \norma{\tilde v}_{\L\infty ([0,t];\reali)} \right\} \right)
    \\
    & \times \tv (t,u_o,u_b) \,
      \norma{g'}_{\L\infty (\mathcal{U}; \reali)} \,
      \norma{v-\tilde v}_{\L1 ([0,t];\reali)} ,
  \end{align*}
  where now
  $\mathcal{U}= \mathcal{U}\left( u_o; {u_b}_{|[0,t]} \right)$ thanks
  to~\eqref{eq:etwas}.
\end{proofof}

\subsection{Proof Related to the Case $v$ Discontinuous}
\label{subs:disc}

\begin{proofof}{Theorem~\ref{thm:main}}
  Let $\eta \in \Cc1(\reali;\reali_+)$ be a smooth mollifier, with
  $\spt \eta \subseteq [0,1]$ and
  $\norma{\eta}_{\L1 (\reali; \reali)} = 1$. For any $n \in \naturali$
  set $\eta_n (z) = n \, \eta(n \, z)$.  Define the sequence
  $v_n \in \C1 (\reali; \reali)$ as follows:
  \begin{displaymath}
    v_n = (\bar v * \eta_n)_{|[0,T]}
    \quad \mbox{ where } \quad
    \bar v (t) =
      \begin{cases}
        v (t) & t \in [0,T]
        \\
        v_{\min} & t \in \reali \setminus [0,T] \,.
      \end{cases}
  \end{displaymath}
  Clearly, $v_n$ converges to $v$ in $\L1 ([0,T]; \reali)$ and it is such
  that
  $v_{\min} \leq v_n (t) \leq \norma{v}_{\L\infty ([0,t];\reali)}$ for
  a.e.~$t\in [0,T]$.

  Obviously, $v_n \in \C{0,1} ([0,T]; [v_{\min}, +\infty[)$. By
  propositions~\ref{prop:nonso} and~\ref{prop:2part}, for any
  $n \in \naturali$, there exists a unique solution $u_n$ to the IBVP
  \begin{equation}
    \label{eq:un}
    \left\{
      \begin{array}{l@{\,}c@{\,}lr@{\,}c@{\,}l}
        \multicolumn{3}{l}{%
        \partial_t u_n + \partial_x \left(v_n(t) \, g(u_n)\right) = 0\qquad\ }
        & (t,x)
        & \in
        & [0,T] \times \reali_+
        \\
        u_n (0,x)
        & =
        & u_o (x)
        & x
        & \in
        & \reali_+
        \\
        u_n (t, 0)
        & =
        & u_b (t)
        & t
        & \in
        & [0,T].
      \end{array}
    \right.
  \end{equation}
  Moreover, by Theorem~\ref{thm:3part} and the properties of the
  sequence $v_n$, for any $n, m \in \naturali$ and for all
  $t \in [0,T]$, the following estimate holds:
  \begin{align*}
    \norma{u_n (t) - u_m (t)}_{\L1 (\reali_+;\reali)}
    \leq \
    & \tv (t, u_o, u_b)
      \left( 1 + \frac{\norma{v}_{\L\infty ([0,t];\reali)}}{v_{\min}}\right)
      \norma{g}_{\L\infty (\mathcal{U};\reali)} \,
      \norma{v_n - v_m}_{\L1 ([0,t];\reali)},
  \end{align*}
  where $\tv (t, u_o, u_b)$ is as in~\eqref{eq:4} and
  $\mathcal{U} = \mathcal{U} (u_o, u_{b|[0,t]})$ as
  in~\eqref{eq:19}. Therefore, $u_n$ is a Cauchy sequence in
  $\C0 ([0,T];\L1 (\reali_+;\reali))$, which is a complete metric
  space with the norm
  $\norma{u}_{\C0 ([0,T];\L1 (\reali_+;\reali))} = \sup_{t\in [0,T]}
  \norma{u (t)}_{\L1 (\reali_+; \reali)}$.
  Call $u$ the limit of the sequence $u_n$.

  The function $u$ has the following properties:

  \paragraph{1. $\L\infty$--bound.} By point~1.~in
  Proposition~\ref{prop:2part}, for all $t \in [0,T]$ we have the
  estimate
  $\norma{u_n (t)}_{\L\infty (\reali_+; \reali)} \leq \max \left\{
    \norma{u_o}_{\L\infty (\reali_+; \reali)}, \,
    \norma{u_b}_{\L\infty ([0,t];\reali)} \right\}$,
  uniformly in $n$. Hence, the same bound holds also on $u$, passing
  to the limit $n \to +\infty$, possibly on a subsequence. Since
  $u_n (t,x) \in \mathcal{U} (u_o, {u_b}_{|[0,t]})$ for
  a.e.~$(t,x)\in[0,T] \times \reali_+$ and for all $n$, also
  $u (t,x) \in \mathcal{U} (u_o, {u_b}_{|[0,t]})$.

  \paragraph{$\boldsymbol{u}$ is a solution.}  Since $u_n$ is a solution
  to~\eqref{eq:un}, for any $k\in \reali$ and for any test function
  $\phi \in \Cc1 (\reali \times \reali; \reali_+)$, it holds
  \begin{align}
    \label{eq:7a}
    & \int_0^T \!\!\! \int_{\reali_+}
      \left(u_n (t,x) - k\right)^\pm  \partial_t\phi (t,x)\d{x} \d{t}
    \\
    \label{eq:7b}
    & + \int_0^T \!\!\! \int_{\reali_+}
      \sgn{}^\pm \left(u_n (t,x) - k \right)
      \left(g \left(u_n (t,x)\right) - g (k)\right) v_n (t) \,
      \partial_x \phi (t,x)
      \d{x} \d{t}
    \\
    \label{eq:7c}
    & + \int_{\reali_+} \left(u_o (x) - k\right)^\pm  \phi (0, x) \,
      \d{x}
      - \int_{\reali_+} \left(u_n (T,x) - k\right)^\pm \phi (T, x) \,
      \d{x}
    \\\label{eq:7d}
    & + \norma{g'}_{\L\infty (\mathcal{U}; \reali)}
      \norma{v_n}_{\L\infty (([0,T];\reali)}
      \int_0^T \left(u_b (t) - k\right)^\pm \phi (t,0) \, \d{t}
      \geq  0,
  \end{align}
  with $\mathcal{U}=\mathcal{U} (u_o, u_{b|[0,T]})$ as
  in~\eqref{eq:19}. Compute the limit as $n \to + \infty$ of each line
  above separately. Concerning the first line we have
  \begin{align*}
    [\eqref{eq:7a}] \leq \
    &
      \int_0^T \!\!\! \int_{\reali_+}
      \left(u (t,x) - k\right)^\pm  \partial_t\phi (t,x)\d{x} \d{t}
    \\
    & +
      \int_0^T \!\!\! \int_{\reali_+}
      \left[ \left(u_n (t,x) - k\right)^\pm -
      \left(u (t,x) - k\right)^\pm  \right]
      \partial_t\phi (t,x)\d{x} \d{t}
    \\
    \leq \
    &
      \int_0^T \!\!\! \int_{\reali_+}
      \left(u (t,x) - k\right)^\pm  \partial_t\phi (t,x)\d{x} \d{t}
      +
      \int_0^T \!\!\! \int_{\reali_+}
      \modulo{u_n (t,x) -u (t,x)} \modulo{\partial_t \phi (t,x)}\d{x} \d{t}
  \end{align*}
  and the second term above tends to $0$ by the Dominated Convergence
  Theorem, so that in the limit we get
  \begin{displaymath}
    \lim_{n \to +\infty} [\eqref{eq:7a}] \leq
    \int_0^T \!\!\! \int_{\reali_+}
    \left(u (t,x) - k\right)^\pm  \partial_t\phi (t,x)\d{x} \d{t}.
  \end{displaymath}
  Pass now to~\eqref{eq:7b}. Compute
  \begin{align*}
    [\eqref{eq:7b}]
    \leq \
    &  \int_0^T \!\!\! \int_{\reali_+}
      \sgn{}^\pm \left(u (t,x) - k \right)
      \left(g \left(u (t,x)\right) - g (k)\right) v (t) \,
      \partial_x \phi (t,x)
      \d{x} \d{t}
    \\
    & + \int_0^T \!\!\! \int_{\reali_+}
      \sgn{}^\pm \left(u (t,x) - k \right)
      \left(g \left(u (t,x)\right) - g (k)\right)
      \left( v_n (t) - v (t) \right)\,
      \partial_x \phi (t,x)
      \d{x} \d{t}
    \\
    & + \int_0^T \!\!\! \int_{\reali_+}
      \Bigl[\sgn{}^\pm \left(u_n (t,x) - k \right)\!
      \left(g \left(u_n (t,x)\right) - g (k)\right)
    \\
    & \qquad\qquad\,\, -
      \sgn{}^\pm \left(u (t,x) - k \right)\!
      \left(g \left(u (t,x)\right) - g (k)\right)  \Bigr]  v_n (t)\,
      \partial_x \phi (t,x)
      \d{x} \d{t}.
  \end{align*}
  By the Dominated Convergence Theorem, the second line above vanishes
  in the limit $n \to + \infty$. The same happens both to the third
  and to the fourth line. Indeed, the map $g$ is in
  $\C1 (\reali;\reali)$, the map $u$ is bounded and hence
  $g_{|\mathcal{U}}$ is Lipschitz continuous. A slight extension
  of~\cite[Lemma~3]{Kruzkov} ensures that the map
  $u \to {\sgn}^\pm (u-k)\left(g (u) -g (k)\right)$ is Lipschitz
  continuous uniformly in $k$. Then:
  \begin{align*}
    & \int_0^T \!\!\! \int_{\reali_+}
      \Bigl[\sgn{}^\pm \left(u_n (t,x) - k \right)\!
      \left(g \left(u_n (t,x)\right) - g (k)\right)
    \\
    & \qquad\qquad\,\, -
      \sgn{}^\pm \left(u (t,x) - k \right)\!
      \left(g \left(u (t,x)\right) - g (k)\right)  \Bigr]  v_n (t)\,
      \partial_x \phi (t,x)
      \d{x} \d{t}
    \\
    \leq \
    & \int_0^T \!\!\! \int_{\reali_+}
      \norma{g'}_{\L\infty (\mathcal{U};\reali)} \,
      \modulo{u_n (t,x) - u (t,x)}  \, v_n (t) \,
      \modulo{\partial_x \phi (t,x)} \d{x} \d{t},
  \end{align*}
  which clearly vanishes as $n \to + \infty$.

  Concerning~\eqref{eq:7c}, $n$ does not appear in the first addend,
  while for the second one we get
  \begin{align*}
    &- \int_{\reali_+} \left(u_n (T,x) - k\right)^\pm \phi (T, x) \d{x}
    \\
    \leq \
    &
      - \int_{\reali_+} \left(u (T,x) - k\right)^\pm \phi (T, x) \d{x}
      +
      \int_{\reali_+} \left[\left(u (T,x) - k\right)^\pm
      - \left(u_n (T,x) - k\right)^\pm \right] \phi (T, x) \d{x}
    \\
    \leq \
    &
      - \int_{\reali_+} \left(u (T,x) - k\right)^\pm \phi (T, x) \d{x}
      +
      \int_{\reali_+} \modulo{u (T,x) - u_n (T,x)} \, \phi (T, x) \d{x},
  \end{align*}
  the second term vanishing as $n \to + \infty$. Finally, by the
  properties of the approximating sequence $v_n$ we know that
  $\norma{v_n}_{\L\infty ([0,t];\reali)}\leq \norma{v}_{\L\infty
    ([0,t];\reali)}$, and this concludes the proof.

  \paragraph{2. $\L1$--Lipschitz continuity in time.} For all
  $t_1, \, t_2 \in [0,T]$, by Point 2.~in Proposition~\ref{prop:2part},
  \begin{align*}
    \norma{u (t_1)- u (t_2)}_{\L1 (\reali_+;\reali)}
    = \
    & \lim_{n \to +\infty}
      \norma{u_n (t_1)- u_n (t_2)}_{\L1 (\reali_+;\reali)}
    \\
    \leq \
    & \tv (t_1 \vee t_2, u_o, u_b) \,
      \norma{v}_{\L\infty ([0,t_1 \vee t_2];\reali)}\,
      \norma{g'}_{\L\infty (\mathcal{U};\reali)}\,
      \modulo{t_1 - t_2},
  \end{align*}
  where $\tv (t,u_o,u_b)$ is as in~\eqref{eq:4} and
  $\mathcal{U} = \mathcal{U} (u_o, {u_b}_{[0,t_1\vee t_2]})$ is as
  in~\eqref{eq:19}.

  \paragraph{3. Total variation estimate. } For all $t \in [0,T]$, the
  lower semicontinuity of the total variation and Point 3.~in
  Proposition~\ref{prop:2part} yield
  \begin{displaymath}
    \tv \left(u (t)\right)
    \leq
    \lim_{n \to +\infty} \tv\left( u_n (t)\right)
    \leq
    \tv (t, u_o, u_b),
  \end{displaymath}
  proving Point 3.

  \paragraph{4. $\L1$--Lipschitz continuity on initial and boundary
    data. } Let $\tilde u_n$ be the solution to~\eqref{eq:un}
  corresponding to the initial datum $\tilde u_o$ and to the boundary
  datum $\tilde u_b$. Analogously as above, the sequence $\tilde u_n$
  converges in $\C0 ([0,T];\L1 (\reali_+;\reali))$ to a function
  $\tilde u$, which is a solution to~\eqref{eq:1part}.

  By Proposition~\ref{prop:nonso} and by the properties of $v_n$, for
  all $t \in [0,T]$,
  \begin{align*}
    \norma{u (t) - \tilde u (t)}_{\L1 (\reali_+;\reali)}
    = \
    & \lim_{n \to + \infty} \norma{u_n (t) - \tilde u_n (t)}_{\L1 (\reali_+;\reali)}
    \\
    \leq \
    & \norma{u_o - \tilde u_o}_{\L1 (\reali_+; \reali)}
      +
      \norma{v}_{\L\infty ([0,t];\reali)} \,
      \norma{g'}_{\L\infty (\mathcal{U};\reali)} \,
      \norma{u_b - \tilde u_b}_{\L1 ([0,t]; \reali)},
  \end{align*}
  where
  $\mathcal{U} = \mathcal{U} ({u_b}_{[0,t]}, {\tilde u}_{b[0,t]})$ is
  as in~\eqref{eq:19}, proving Point 4 and thus also the uniqueness
  of the solution to~\eqref{eq:1part}.

  \paragraph{5. $\L1$--stability with respect to $\boldsymbol{v}$ and $\boldsymbol{g}$. }
  Approximating $\tilde v$ as at the beginning of the proof yields the
  sequence $\tilde v_n$. Call $\tilde u_n$ the solution to the
  IBVP~\eqref{eq:un} corresponding to the flux
  $\tilde v_n \, \tilde g$. As above, the sequence $\tilde u_n$
  converges in $\C0 ([0,T];\L1 (\reali_+;\reali))$ to a function
  $\tilde u$, which is a solution to~\eqref{eq:1part}, corresponding
  to the flux $\tilde v \, \tilde g$.

  By Theorem~\ref{thm:3part}, for all $t \in [0,T]$
  \begin{align*}
    \norma{u (t) - \tilde u (t)}_{\L1 (\reali_+;\reali)}
    = \
    & \lim_{n \to + \infty} \norma{u_n (t) - \tilde u_n (t)}_{\L1 (\reali_+;\reali)}
    \\
    \leq \
    & \tv (t, u_o, u_b)
      \left(
      A \, t \, \norma{g' - \tilde g'}_{\L\infty (\mathcal{U};\reali)}
      +
      B  \, \norma{v-\tilde v}_{\L1 ([0,t];\reali)}
      \right),
  \end{align*}
  where $\tv (t,u_o,u_b)$ is defined in~\eqref{eq:4},
  $\mathcal{U} = \mathcal{U} (u_o,, {u_b}_{\vert [0,t]})$ is as
  in~\eqref{eq:19} and $A$, $B$ are as in~\eqref{eq:15}, completing
  the proof.
\end{proofof}

\noindent\textbf{Acknowledgement:} The present work was supported by the PRIN 2015 project \emph{Hyperbolic Systems of Conservation Laws
  and Fluid Dynamics: Analysis and Applications} and by the
INDAM--GNAMPA 2017 project \emph{Conservation Laws: from Theory to
  Technology} and by the MATHTECH project funded by CNR--INDAM. Part
of this work was accomplished while the authors were visiting the
Mittag--Leffler Institut.

{

  \small

  \bibliography{ColomboRossi}

  \bibliographystyle{abbrv}

}

\end{document}